%% file: main.tex
\documentclass[11pt,letterpaper]{article}
\usepackage{float, graphicx}

\usepackage[]{epsfig}
\usepackage{amsmath, amsthm, amssymb,}
\usepackage{aliascnt}
\usepackage{epsfig}
\usepackage{verbatim}
\usepackage{multicol}
\usepackage{url}
\usepackage{latexsym}
\usepackage{mathrsfs}
\usepackage[colorlinks, bookmarks=true, citecolor=blue]{hyperref}
\usepackage{amsmath}
\usepackage{enumerate}
\usepackage{bm}
\usepackage{enumitem}
\usepackage{tikz}
\usepackage{cleveref}

\usepackage{cite}
\usepackage{color}
\usepackage{xcolor}

\usepackage{authblk}

\usepackage[margin=0.8in]{geometry}

\makeatletter

\def\@settitle{\begin{center}%
    \bfseries
 \normalfont\LARGE\@title
  \end{center}%
}
\def\@setauthors{\begin{center}%
 \normalsize\@author
  \end{center}%
}

\makeatother

\numberwithin{equation}{section}

\newcommand{\sfT}{{\mathsf T}}

\renewcommand{\cal}{\mathcal}

\newcommand{\cF}{{\cal F}}

\newcommand{\fc}{{\mathfrak c}}

\newcommand{\rd}{{\rm d}}

\newcommand{\ri}{\mathbf{i}}

\newcommand{\R}{\mathbb{R}}

\newcommand{\bC}{{\mathbb C}}

\newcommand{\bH}{\mathbb{H}}

\newcommand{\bR}{{\mathbb R}}

\newcommand{\bZ}{\mathbb{Z}}

\newcommand{\eps}{\varepsilon}

\DeclareMathOperator{\semci}{sc}

\DeclareMathOperator{\Tr}{Tr}

\DeclareMathOperator{\supp}{supp}
\DeclareMathOperator{\spec}{spec}
\DeclareMathOperator{\dist}{dist}

\DeclareMathOperator{\oo}{o}

\renewcommand{\Re}{\mathop{\mathrm{Re}}}
\renewcommand{\Im}{\mathop{\mathrm{Im}}}

\renewcommand{\leq}{\leqslant}
\renewcommand{\geq}{\geqslant}

\newcommand{\beq}{\begin{equation}}
\newcommand{\eeq}{\end{equation}}
\theoremstyle{plain}

\newtheorem{theorem}{Theorem}[section]
\newtheorem*{theorem*}{Theorem}

\newaliascnt{lemma}{theorem}
\newtheorem{lemma}[lemma]{Lemma}
\aliascntresetthe{lemma}
\newtheorem*{lemma*}{Lemma}

\newaliascnt{corollary}{theorem}

\aliascntresetthe{corollary}
\newtheorem*{corollary*}{Corollary}

\newaliascnt{proposition}{theorem}
\newtheorem{proposition}[proposition]{Proposition}
\aliascntresetthe{proposition}
\newtheorem*{proposition*}{Proposition}

\newaliascnt{assumption}{theorem}

\aliascntresetthe{assumption}
\newtheorem*{assumption*}{Assumption}

\newaliascnt{claim}{theorem}

\aliascntresetthe{claim}

\newaliascnt{conjecture}{theorem}

\aliascntresetthe{conjecture}

\newaliascnt{definition}{theorem}

\aliascntresetthe{definition}
\newtheorem*{definition*}{Definition}

\newaliascnt{example}{theorem}

\aliascntresetthe{example}
\newtheorem*{example*}{Example}

\newaliascnt{remark}{theorem}
\newtheorem{remark}[remark]{Remark}
\aliascntresetthe{remark}
\newtheorem*{remark*}{Remark}

\newaliascnt{remarks}{theorem}

\aliascntresetthe{remarks}
\newtheorem*{remarks*}{Remarks}

\crefname{assumption}{assumption}{assumptions}
\Crefname{assumption}{Assumption}{Assumptions}

\crefname{theorem}{theorem}{theorems}
\Crefname{theorem}{Theorem}{Theorems}
\crefname{lemma}{lemma}{lemmas}
\Crefname{lemma}{Lemma}{Lemmas}
\crefname{proposition}{proposition}{propositions}
\Crefname{proposition}{Proposition}{Propositions}
\crefname{corollary}{corollary}{corollaries}
\Crefname{corollary}{Corollary}{Corollaries}
\crefname{claim}{claim}{claims}
\Crefname{claim}{Claim}{Claims}
\crefname{conjecture}{conjecture}{conjectures}
\Crefname{conjecture}{Conjecture}{Conjectures}
\crefname{definition}{definition}{definitions}
\Crefname{definition}{Definition}{Definitions}
\crefname{example}{example}{examples}
\Crefname{example}{Example}{Examples}
\crefname{remark}{remark}{remarks}
\Crefname{remark}{Remark}{Remarks}

\title{Support of Dyson Brownian Motion}
\author{Jiaoyang Huang  \quad\quad\quad   Shengjing Xu }

\date{January 2026}

\begin{document}

\maketitle
\begin{abstract}
We consider \(\beta\)-Dyson Brownian motion, with \(\beta\geq 1\),
started from a deterministic configuration with uniformly bounded
support. Let \(\mu_t\) be the semicircular free-convolution flow issued
from the initial empirical measure, and set
$
S_t=\operatorname{supp}\mu_t.
$
For every fixed \(\sf T,\varepsilon>0\), with probability at least
$1-Ce^{-(\log n)^2}$,
every particle remains within an \(\varepsilon\)-neighborhood of
\(S_t\) for all \(0\leq t\leq \sf T\). The result holds from time zero,
requires no regularity assumption at the initial spectral edges, and
applies to multi-cut supports with macroscopic interior gaps.

A key ingredient is a deterministic local resolvent exclusion
principle: an \(\oo((n\eta)^{-1})\) comparison of Stieltjes transforms on
a complex disc above a real point separated from the reference support
excludes eigenvalues from the corresponding real interval. This gives a
model-independent mechanism for converting local resolvent estimates
into spectral confinement.
\end{abstract}

%\tableofcontents

\input{intro}

\input{main_results}

\input{Random_matrix_example}

\input{DBM}

\bibliography{ref}
\bibliographystyle{abbrv}

\end{document}

%% file: intro.tex
\section{Introduction}

In 1962, Dyson introduced matrix-valued Brownian motion and observed that
its eigenvalues evolve as an interacting diffusion \cite{MR0148397}.  Its
general-$\beta$ extension is the $\beta$-Dyson Brownian motion
\begin{equation}\label{e:DBM}
  \rd\lambda_i(t)
  =
  \left(\frac{2}{\beta n}\right)^{1/2}\rd B_i(t)
  +\frac1n\sum_{j\ne i}
  \frac{\rd t}{\lambda_i(t)-\lambda_j(t)},
  \qquad 1\leq i\leq n,
\end{equation}
where $\beta\geq1$ and $B_1,\ldots,B_n$ are independent Brownian
motions.  We consider a solution started from a deterministic, strictly
ordered configuration
\[
  \lambda_1(0)>\cdots>\lambda_n(0).
\]
For $\beta=1$ and $\beta=2$, \eqref{e:DBM} gives, under the usual
normalization, the eigenvalue processes of real symmetric and complex
Hermitian matrix Brownian motion, respectively.  Dyson Brownian motion is
a basic model for a one-dimensional interacting particle system with
singular repulsion and has played a central role in the proof of random
matrix universality; see \cite{erdHos2011universality,erdHos2017dynamical}.

We study the location of all particles relative to the deterministic
free-convolution flow generated by the initial empirical measure.  Assume
that the initial configuration is contained in a fixed compact interval,
uniformly in $n$, and set
\[
  \mu_0:=\frac1n\sum_{i=1}^n\delta_{\lambda_i(0)},
  \qquad
  \mu_t:=\mu_0\boxplus\mu_{\mathrm{sc}}^{(t)},
  \qquad
  S_t:=\operatorname{supp}\mu_t,
\]
where $\mu_{\mathrm{sc}}^{(t)}$ is the semicircle law of variance $t$.
The measure $\mu_t$ is the deterministic semicircular free-convolution
flow issued from $\mu_0$ \cite{FCSD}.  Writing
\[
  \Lambda_n(t):=\{\lambda_1(t),\ldots,\lambda_n(t)\},
\]
our main result states that,

\begin{theorem}
\label{t:DBM_support}
We assume that there exists a constant $K_0>0$
\begin{equation}
  \operatorname{supp}\mu_0
  \subset[-K_0,K_0]
  \label{eq:initial-support-bound}
\end{equation}
For every \(\varepsilon>0\) and \(\sfT>0\), there exist
\(
  C=C(\varepsilon,\sfT,K_0,\beta)>0
\)
such that, for $n$ large enough,
\begin{equation}
   \mathbb P\left(
    \Lambda_n(t)\subset S_t^{[\varepsilon]}
    \text{ for every }0\leq t\leq \sfT
  \right)
  \geq
  1-Ce^{-(\log n)^2}.
  \label{eq:time-uniform-multicut-confinement}
\end{equation}
where $S_t^{[\varepsilon]}$ denotes the closed $\varepsilon$-neighborhood
of $S_t$
\[
  S_t^{[\varepsilon]}
  :=
  \left\{
    x\in\mathbb R:
    \operatorname{dist}(x,S_t)\le\varepsilon
  \right\}.
\]
\end{theorem}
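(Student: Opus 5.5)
The plan is to combine three ingredients: a local law for the Stieltjes transform of the empirical measure along the flow; the deterministic local resolvent exclusion principle announced in the abstract, used to convert that local law into absence of particles; and a continuity/net argument in time and space to make the bound uniform in $t\in[0,\sfT]$.

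Set $m_n(t,z):=\frac1n\sum_i(\lambda_i(t)-z)^{-1}$ and $m_t(z):=\int(x-z)^{-1}\mu_t(\rd x)$. We have $m_t(z)=m_0(z+tm_t(z))$, so the characteristic map $z\mapsto z_t:=z+tm_t(z)$ transports the free-convolution flow backward to time $0$.

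\textbf{Step 1: Itô comparison along characteristics.} Apply Itô's formula to $m_n(s,z_{t-s})$ with $z_{t-s}$ evolving along the characteristics of the deterministic Burgers equation $\partial_s m=m\,\partial_z m$. The drift of \eqref{e:DBM} produces exactly the Burgers nonlinearity plus a term $\frac{2-\beta}{\beta n}\partial_z^2 m_n$ of size $O((n\eta^3)^{-1})$. The martingale part has quadratic variation $\lesssim n^{-2}\int\Im m_n/\eta^3$. Since the time-zero difference vanishes, a Grönwall-type argument along the characteristics then gives
\[
|m_n(t,w)-m_t(w)|\le \frac{(\log n)^{C}}{n\eta^{1/2}\,\dist(w,S_t)^{1/2}}
\]
for $w$ on a complex disc above any real $E$ with $\dist(E,S_t)\ge\eps/2$ and $\Im w\ge\eta$. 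The probability bound is $1-e^{-(\log n)^2}$, obtained from Burkholder/exponential martingale inequalities. The key point here is that for $E$ outside $S_t$, $m_t$ is analytic near $E$ and $|\partial_z z_s|$ stays bounded along characteristics landing at distance $\gtrsim\eps$ from the support. This is why no edge regularity of $\mu_0$ is needed: only points away from $S_t$ are used, and at time $0$ we have $\mu_0=\frac1n\sum\delta_{\lambda_i(0)}$ exactly.

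\textbf{Step 2: exclusion.} Invoke the local resolvent exclusion principle with reference measure $\mu_t$, a disc of radius $\asymp\eps$ above $E$, and $\eta=n^{-1}(\log n)^{C'}$. On that disc the estimate from Step 1 is $o((n\eta)^{-1})$, so the principle excludes all $\lambda_i(t)$ from an interval around $E$ of length $\gtrsim\eps$. Covering $\{E:\dist(E,S_t)\ge\eps\}\cap[-K,K]$, with $K=K_0+2\sqrt{\sfT}+1$, by $O(\eps^{-1})$ such intervals handles the bounded region. The far region $|x|>K$ is handled separately by a crude bound $\max_i|\lambda_i(t)|\le K_0+3\sqrt{\sfT}$ with overwhelming probability. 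That crude bound follows by comparing $\lambda_1$ and $\lambda_n$ with the maximum of a $\beta$-ensemble, or directly from Step 1 at large $|E|$.

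\textbf{Step 3: uniformity in time.} Take a grid of times with spacing $n^{-10}$ and union-bound over grid points, which costs a factor $n^{10}$ absorbed into $e^{-(\log n)^2}$. Between grid points, particles move by at most $n^{-4}$ with overwhelming probability, using the SDE together with the standard bound on $\sum_{j\ne i}|\lambda_i-\lambda_j|^{-1}$ for $\beta\ge1$. Meanwhile $S_t$ varies continuously in Hausdorff distance; in fact $t\mapsto S_t^{[\eps]}$ is lower semicontinuous in the needed sense, since gaps shrink monotonically. Running the argument with $\eps/2$ then yields \eqref{eq:time-uniform-multicut-confinement}.

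The main obstacle is Step 1 near small times and near the boundary of a gap that is closing. There the characteristics from $w$ may pass close to atoms of $\mu_0$ or to points of $S_s$ for $s<t$, and the Grönwall factor $\exp\int|\partial_z m_s|\,\rd s$ must stay bounded. I would first try to prove that for $\dist(E,S_t)\ge\eps/2$ the backward characteristic satisfies $\dist(z_{t-s},S_{t-s})\ge c(\eps)$ uniformly in $s$. This should follow from the monotonicity of gaps and the explicit inverse map $E\mapsto E-tm_t(E)$ on $\mathbb R\setminus S_t$.
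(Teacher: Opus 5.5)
There is a genuine gap in Step 1: as written it is circular, and the estimates you state do not reach the precision that \Cref{c:deterministic_comparison} needs.

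\textbf{Step 1 does not give $\oo((n\eta)^{-1})$ without a bootstrap.} Off the support the characteristics gain essentially no height. Since $\Im m_t(z_t)\le \eta_t/\dist(\Re z_t,S_t)^2\lesssim \eta_t/\eps^2$, one has $\eta_s\asymp\eta_t$ and $\Im m_s(z_s)\asymp\eta_t$ for all $s\le t$.

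Consider your quadratic-variation bound $n^{-2}\int_0^t \Im m_n(s,z_s)\,\eta_s^{-3}\,\rd s$. Even granting $\Im m_n\approx \Im m_s$, it is of order $t/(n\eta)^2$, so the martingale is only of order $(n\eta)^{-1}$ up to logarithms. Your drift bound $O((n\eta^3)^{-1})$ is worse still. The threshold in the exclusion principle is $\delta_n/(n\eta)$ with $\delta_n\to0$, and neither estimate beats it. So your claimed $n^{-1}\eta^{-1/2}\dist^{-1/2}$ bound does not follow from what you wrote.

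That $\sqrt{\eta/\eps}$ gain comes from $\int_0^t \rd r/(\eps^2(t-r)^2+\eta^2)\lesssim (\eps\eta)^{-1}$. This in turn needs the lower bound $|\lambda_i(r)-z_r(u)|^2\gtrsim \eps^2(t-r)^2+\eta^2$ for \emph{every particle} at \emph{every earlier time}, as in \eqref{eq:distance-clean}. Your ``main obstacle'' paragraph supplies only the deterministic half: the backward characteristic moves away from $S_r$, which is \Cref{lem:multicut-reference-geometry}. The other half is that the particles at times $r<t$ are confined near $S_r$, and that is the statement you are trying to prove. So ``local law first, then exclusion at each time'' is circular.

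The missing idea is a bootstrap coupling the two. In the paper, a stopping time $\sigma$ records the first failure of $|\widetilde m_s-m_s|<\delta_n/(n\Im w)$ on the transported discs over the barrier points $\mathcal F_t=\{x:\dist(x,S_t)=\eps\}$. The argument then runs as follows:
\begin{enumerate}
\item Before $\sigma$, the exclusion principle keeps all particles out of $\mathcal F_s^{[\eta_*]}$.
\item Since $\rho_i(s)=\dist(\lambda_i(s),S_s)$ is continuous with $\rho_i(0)=0$, no particle can reach distance $\eps$ from $S_s$.
\item This confinement yields \eqref{eq:distance-clean}, and hence the improved martingale bound $(\log n)^2/(n\sqrt{\eps\eta_*})$ and drift bound $(\log n)/(n\eps)$.
\item Gr\"onwall's inequality then shows the comparison never fails, so $\sigma=\sfT$. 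Its coefficient $|\partial_z\widetilde m_r|\le \Im\widetilde m_r/\eta_r=O(1)$ is also controlled only before $\sigma$.
\end{enumerate}

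\textbf{Further problems in Steps 2 and 3.} Step 2 misstates the exclusion principle. It requires $\eta\le n^{-\mathfrak c}\eps$, uses only the disc $B(u+\ri\eta,\eta/2)$, and excludes only $[u-\eta,u+\eta]$. With a disc of radius $\asymp\eps$, the term $n(\eta/\eps)^{2k+2}$ in \eqref{eq:det-local-contradiction} is of order $n$, so nothing is excluded. You would need $O(\eps/\eta)$ small intervals; that part is harmless, since it is deterministic.

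The scale $\eta=n^{-1}(\log n)^{C'}$ is also too small for the Gr\"onwall step. With your own Step 1 error, $\Im\widetilde m_r/\eta_r$ is bounded only by $O(1)+(\log n)^{C}/(n\eta^{3/2}\eps^{1/2})$, which is of order $n^{1/2}$ up to logarithms at that scale. This is why the paper works at $\eta_*=n^{-\mathfrak c}$ with $\mathfrak c<1/2$.

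Finally, the barrier formulation makes both your far-region bound and your time grid unnecessary. A particle can leave $S^{[\eps]}$ only by sitting on $\mathcal F_{t_0}$ at some time $t_0$, so only the finitely many barrier points need protecting, and the stopping-time argument is already uniform in $t$. This matters because your grid step rests on a displacement bound: at most $n^{-4}$ over time $n^{-10}$, with probability $1-e^{-(\log n)^2}$. You derive it from a ``standard bound'' on $\sum_{j\ne i}|\lambda_i-\lambda_j|^{-1}$, but no such bound is available off the shelf for general $\beta$, since the drift is unbounded at near-collisions.
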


The result starts at time zero, when
$S_0=\Lambda_n(0)$, and follows the support throughout the entire interval
$[0,\sfT]$.  It requires neither convergence of $\mu_0$ to a limiting
measure nor any regularity assumption on its edges.  In particular, the
support may have several components, possibly depending on $n$, and
\eqref{eq:time-uniform-multicut-confinement} excludes particles both outside the
outermost components and inside every macroscopic interior gap.  The
conclusion is one-sided: we do not claim that every point of $S_t$ lies
close to a particle.

\paragraph{Comparison with fixed-time deformed Wigner matrices.}
For the classical values $\beta=1,2$, a fixed-time marginal of additive
matrix Dyson Brownian motion is, after normalization, a deterministic
matrix plus a Gaussian Wigner matrix. Capitaine, Donati-Martin, F\'eral,
and F\'evrier studied the more general deformed Wigner model
\[
  M_N=\frac1{\sqrt N}W_N+A_N
\]
and proved spectral inclusion relative to the exact finite-$N$ free
convolution, together with an exact separation theorem for admissible
gaps \cite{capitaine2011free}. In particular, under their assumptions,
for every fixed $\varepsilon>0$, almost surely for all sufficiently large
$N$,
\[
  \operatorname{Spec}(M_N)
  \subset
  \bigl(
    \operatorname{supp}
    (\mu_{\mathrm{sc}}^{(\sigma)}\boxplus\mu_{A_N})
  \bigr)^{[\varepsilon]},
\]
where $\mu_{\mathrm{sc}}^{(\sigma)}$ is the semicircle law determined by
the variance of the Wigner entries.

A significant restriction in their framework is the fixed finite-spike
assumption. The empirical measure of $A_N$ is required to converge to a
fixed compactly supported measure $\nu$; all but a fixed number of the
eigenvalues of $A_N$ must approach $\operatorname{supp}\nu$ uniformly;
and the remaining eigenvalues are fixed spikes with fixed
multiplicities. Thus, $A_N$ itself may have full rank, but only finitely
many of its eigenvalues may remain macroscopically separated from the
limiting bulk. Their assumptions therefore do not cover a growing number
of separated eigenvalues or clusters, arbitrary $N$-dependent spike
locations, or deterministic configurations for which no limiting bulk
measure is prescribed.

More broadly, the exclusion of such exceptional eigenvalues is the main
spectral content of strong convergence. Convergence of empirical spectral
measures does not detect finitely many outliers, whereas strong
convergence controls operator norms of noncommutative polynomials and
thereby rules out spectrum away from the corresponding limiting support.
A recent series of works has developed a polynomial method for proving
this stronger form of convergence. The central observation is that
quantitative first-order expansions of expected traces contain enough
information to exclude outliers and establish strong convergence
\cite{chen2025newapproachstrongconvergence,
chen2024newapproachstrongconvergence}. This approach has also led to new
strong-convergence results beyond the classical random matrix ensembles;
see, for example,
\cite{magee2025strongconvergenceuniformlyrandom} and the survey
\cite{vanhandel2025strongconvergencephenomenon}. Although these results
are generally formulated relative to a limiting spectrum, rather than
the exact finite-$N$ free-convolution support appearing above, they
provide a flexible general mechanism for converting quantitative trace
asymptotics into the exclusion of spectral outliers.

\paragraph{Comparison with microscopic rigidity.}
Characteristic-flow local laws and rigidity estimates for Dyson Brownian
motion with general $\beta$ were developed in \cite{HuangLandon2019}.
Much sharper results are available near a specified regular outer edge
\cite{aggarwal2024edge,landon2017convergence,AdhikariHuang2020}.  Under suitable
square-root assumptions on the initial density, these works obtain
microscopic rigidity on the $n^{-2/3}$ scale, typically after a positive
waiting time.  Such results are stronger than ours near a regular edge,
but they do not directly yield a single time-uniform statement for the
entire, possibly multi-cut, support of an arbitrary deterministic initial
configuration.  Our result is therefore complementary: it is macroscopic,
but global in both time and support geometry.

\paragraph{A deterministic local exclusion principle.}
The main technical input is a deterministic criterion that converts a
local resolvent estimate into spectral exclusion.  Let $Y_n$ be Hermitian,
let $\mu$ be a compactly supported reference measure, and suppose that
$u$ is at distance at least $\varepsilon$ from $\operatorname{supp}\mu$.
If $\eta\leq n^{-c}\varepsilon$ and
\[
  \sup_{z\in B(u+\mathrm i\eta,\eta/2)}
  \bigl|m_{Y_n}(z)-m_\mu(z)\bigr|
  \leq
  \frac{\delta_n}{n\eta},
  \qquad
  \delta_n\longrightarrow0,
\]
then, for all sufficiently large $n$, $Y_n$ has no eigenvalue in
$[u-\eta,u+\eta]$.  The proof uses Schwarz symmetry and higher-order
Cauchy estimates to extract a positive Poisson-type kernel from the
Stieltjes transform.  This criterion is model independent: once the local
resolvent comparison is known, no further spectral argument is needed.

To apply it to Dyson Brownian motion, we place small complex discs above
the boundary of the forbidden region and transport them backward along
the characteristics of the free-convolution equation.  A stopping time
records the first failure of the resolvent comparison.  Before that time,
the deterministic criterion prevents any particle from crossing the
moving barriers.  It\^o's formula yields an evolution equation for the
resolvent error along each characteristic, and the martingale and
finite-$n$ drift terms are controlled by the
Burkholder--Davis--Gundy and Gr\"onwall inequalities
\cite{DFIM,PTIIM}.  Macroscopic separation from the full support provides
uniform estimates even when the support has several components, closing
the bootstrap and proving \eqref{eq:time-uniform-multicut-confinement}.

\paragraph{Organization.}
\Cref{s:exclusion} establishes the
deterministic local exclusion principle. \Cref{s:dbm} applies the deterministic
local principle to Dyson Brownian motion and proves time-uniform
multi-cut spectral confinement from the initial time.

%% file: main_results.tex
\section{Stieltjes-transform  for outliers}
\label{s:exclusion}

Let \(Y_n\) be a deterministic Hermitian \(n\times n\) matrix, its eigenvalues are real; denote them (counted with algebraic multiplicity) by
\[
\lambda^{(n)}_1 \ge \lambda^{(n)}_2 \ge \cdots \ge \lambda^{(n)}_n \in \mathbb R .
\]
Define the empirical spectral measure of $Y_n$ by
\[
\mu_{Y_n}:=\frac{1}{n}\sum_{i=1}^n \delta_{\lambda^{(n)}_i}.
\]
Its Stieltjes transform is
\[
m_{Y_n}(z):=\int_{\mathbb R}\frac{1}{t-z}\,\mu_{Y_n}(dt)
=\frac{1}{n}\Tr\bigl((Y_n-zI)^{-1}\bigr),
\qquad z\in\mathbb C\setminus\mathbb R.
\]
For a probability measure $\mu$ on $\mathbb R$, we define the Stieltjes transform of $\mu$ 
\[
m_{\mu}(z):=\int_{\mathbb R}\frac{1}{t-z}\,\mu(dt),
\qquad z\in\mathbb C\setminus\mathbb R.
\]

We have the following deterministic resolvent comparison, which can be used to rule out outlier eigenvalues. 
\begin{theorem}[Deterministic resolvent comparison]\label{c:deterministic_comparison}
Let \(Y_n\) be a deterministic Hermitian \(n\times n\) matrix, and there exists a constant $K>0$, such that 
$\supp(\mu_{Y_n})\subset [-K,K]$. Let \(\mu\) be a
compactly supported probability measure. Assume that there exist a constant
\(\fc>0\), scales \(\varepsilon=\varepsilon_n\geq n^{-1}\) and
\(\eta=\eta_n>0\), and an error parameter \(\delta_n>0\) such that
\begin{equation}\label{eq:det_eta_eps_relation}
\eta\leq n^{-\fc}\varepsilon,
\qquad
\delta_n\to 0.
\end{equation}
Let \(u=u_n\in\bR\) satisfy \(\dist(u,\supp\mu)\geq\varepsilon\), and assume
\begin{equation}\label{eq:deterministic_resolvent_bound}
\sup_{z\in B(u+\ri\eta,\eta/2)}
\bigl|m_{Y_n}(z)-m_{\mu}(z)\bigr|
\leq \frac{\delta_n}{n\eta}.
\end{equation}
Then, for all sufficiently large \(n\),
\[
\lambda_i^{(n)}\notin[u-\eta,u+\eta],
\qquad 1\leq i\leq n.
\]
\end{theorem}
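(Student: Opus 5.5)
The plan is to argue by contradiction. Suppose some eigenvalue $\lambda_{i_0}^{(n)}$ lies in $[u-\eta,u+\eta]$, and reduce everything to a real-variable function along the vertical segment above $u$. For real $y>0$ set
\[
F(y):=\frac{\Im m_{Y_n}(u+\ri y)}{y}=\frac1n\sum_{i=1}^n\frac{1}{a_i+y^2},\qquad a_i:=(\lambda_i^{(n)}-u)^2,
\]
and $G(y):=\Im m_\mu(u+\ri y)/y$. Viewed as a function of $s=y^2$, $F$ becomes $\tilde F(s)=\frac1n\sum_i (a_i+s)^{-1}$. This is completely monotone, so its derivatives have a definite sign:
\[
(-1)^k\tilde F^{(k)}(s)=\frac{k!}{n}\sum_i (a_i+s)^{-(k+1)}\;\geq\;\frac{k!}{n}\,(2\eta^2)^{-(k+1)}\qquad\text{at } s=\eta^2,
\]
since $a_{i_0}\leq\eta^2$ and every term is positive. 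This is the positive Poisson-type kernel. The idea is to fix an integer $k$ with $k>1/(2\fc)$, depending only on $\fc$, and show that the $k$-th $s$-derivative of $\tilde F$ at $s=\eta^2$ is in fact $o\bigl(n^{-1}\eta^{-2k-2}\bigr)$. That contradicts the lower bound above.

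\textbf{Step 1: the reference term.} Since $\dist(u,\supp\mu)\geq\varepsilon$, Schwarz reflection shows that $m_\mu$ extends holomorphically to the disc $B(u,\varepsilon)$, with $|m_\mu|\leq 2/\varepsilon$ on $B(u,\varepsilon/2)$. The formula
\[
G(y)=\frac{m_\mu(u+\ri y)-m_\mu(u-\ri y)}{2\ri y}
\]
then defines an even holomorphic function of $y$ on $|y|<\varepsilon/2$, with $|G|\lesssim\varepsilon^{-2}$ there. For $|y|$ small the bound follows from the maximum principle applied to the difference quotient. Evenness gives $G(y)=H(y^2)$ with $H$ holomorphic on $|s|<\varepsilon^2/4$ and $|H|\lesssim\varepsilon^{-2}$. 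The Cauchy estimates then yield $|H^{(k)}(\eta^2)|\leq C_k\varepsilon^{-2k-2}$, because $\eta^2\ll\varepsilon^2$. Comparing with the lower bound, the ratio is $\lesssim n(\eta/\varepsilon)^{2k+2}\leq n^{1-\fc(2k+2)}\to0$ by \eqref{eq:det_eta_eps_relation} and the choice of $k$.

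\textbf{Step 2: the error term.} Set $f=m_{Y_n}-m_\mu$, which is holomorphic on $B(u+\ri\eta,\eta/2)$ with $|f|\leq\delta_n/(n\eta)$ by \eqref{eq:deterministic_resolvent_bound}. Cauchy estimates on the subdisc of radius $\eta/4$ give $|f^{(j)}(u+\ri y)|\leq C_j\,\eta^{-j}\,\delta_n/(n\eta)$ for $|y-\eta|\leq\eta/4$. Since $\partial_y^j\Im f(u+\ri y)=\Im(\ri^j f^{(j)})$, the real function $y\mapsto \Im f(u+\ri y)/y$ has $j$-th derivatives $\lesssim C_j\delta_n/(n\eta^{2+j})$ near $y=\eta$. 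Passing to $s=y^2$ via $\partial_s=(2y)^{-1}\partial_y$, and applying Faà di Bruno/Leibniz with $y\asymp\eta$, each $s$-derivative costs a factor $\eta^{-2}$. Hence $|\partial_s^k(\tilde F-H)(\eta^2)|\leq C_k\,\delta_n\,n^{-1}\eta^{-2k-2}$, which is $o\bigl(n^{-1}\eta^{-2k-2}\bigr)$ because $\delta_n\to0$ and $k$ is fixed.

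Combining the two steps, $|\tilde F^{(k)}(\eta^2)|\leq|H^{(k)}(\eta^2)|+|(\tilde F-H)^{(k)}(\eta^2)|=o\bigl(n^{-1}\eta^{-2k-2}\bigr)$, which contradicts the positivity lower bound for $n$ large. The threshold for $n$ depends only on $\fc$, on $k$, and on the rate $\delta_n\to0$.

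The main obstacle, and the step needing the most care, is extracting a sign-definite quantity. Derivatives of $m_{Y_n}$ itself are not positive, and Im $m_{Y_n}$ alone fails because $n\eta^2/\varepsilon^2$ need not be small. The device above addresses both issues: it divides by $y$ and changes variable to $s=y^2$, so that all eigenvalues, not only the offending one, contribute with the same sign. It also requires the Schwarz-symmetry step showing that $G$ is even and analytic, so that the reference measure's contribution is smooth on scale $\varepsilon^2$ in $s$. The bookkeeping in the chain rule is routine once $k$ is fixed.
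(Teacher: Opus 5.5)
Your argument is correct and is essentially the paper's proof. Your $F(y)=\Im m_{Y_n}(u+\ri y)/y$ is exactly the divided difference $H_{m_{Y_n}}(u+\ri y,u-\ri y)$ from \Cref{l:wirtinger_calculus}, and your $k$-th derivative in $s=y^2$ at $s=\eta^2$ plays the role of the paper's mixed derivative $\partial_z^{k-1}\partial_w^k$ at $(u+\ri\eta,u-\ri\eta)$. This produces the same positive kernel $((\lambda_i^{(n)}-u)^2+\eta^2)^{-(k+1)}$, the same Cauchy-estimate control of the $\delta_n/(n\eta)$ error, the same $\varepsilon^{-2k-2}$ bound for $\mu$, and the same choice of $k$ with $n^{1-\fc(2k+2)}\to0$.

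The differences are bookkeeping only. Working in one real variable lets you use Cauchy estimates on the upper disc alone, with no two-disc formula and no Schwarz extension to $B(u-\ri\eta,\eta/2)$. Your Step 1 can also be shortened to one line, since $H^{(k)}(s)=(-1)^k k!\int\mu(\rd x)\,((x-u)^2+s)^{-(k+1)}$ gives $|H^{(k)}(\eta^2)|\le k!\,\varepsilon^{-2k-2}$ directly.
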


\subsection{Proof of \Cref{c:deterministic_comparison}}
We first record the Cauchy estimate used in the deterministic comparison.
\begin{lemma}\label{l:wirtinger_calculus}
Let \(z_0=u+\ri\eta\), \(w_0=u-\ri\eta\), and suppose that \(f\) is analytic in
neighborhoods of the closed discs \(B(z_0,\eta/2)\) and \(B(w_0,\eta/2)\). Define
\[
H_f(z,w):=\frac{f(z)-f(w)}{z-w}.
\]
For all integers \(a,b\geq0\),
\begin{equation}\label{eq:lemma-cauchy}
\bigl|\partial_z^a\partial_w^bH_f(z_0,w_0)\bigr|
\leq C_{a,b}\eta^{-(a+b+1)}
\sup_{B(z_0,\eta/2)\cup B(w_0,\eta/2)}|f|,
\end{equation}
where
\[
C_{a,b}:=2^{a+b+1}a!b!\bigl(3^{-(a+1)}+3^{-(b+1)}\bigr).
\]
In particular,
\begin{equation}\label{eq:lemma-cauchy-special}
\bigl|\partial_z^{k-1}\partial_w^kH_f(z_0,w_0)\bigr|
\leq\frac{2^{2k+2}}{3^{k+1}}(k-1)!k!\,\eta^{-2k}
\sup_{B(z_0,\eta/2)\cup B(w_0,\eta/2)}|f|.
\end{equation}
\end{lemma}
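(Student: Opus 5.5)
The plan is to split $H_f$ into two pieces, each of which is analytic in one variable and an explicit rational function in the other. Estimating them separately gives the sharper constant $C_{a,b}$; a joint Cauchy estimate on $H_f$ would lose the factor $3^{-(a+1)}+3^{-(b+1)}$.

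Since $|z_0-w_0|=2\eta$, the closed discs $B(z_0,\eta/2)$ and $B(w_0,\eta/2)$ are disjoint. For $z$ and $w$ in these discs we have $|z-w|\ge \eta$, so $H_f$ is analytic there and we may write
\[
H_f(z,w)=\frac{f(z)}{z-w}-\frac{f(w)}{z-w}=:A(z,w)-B(z,w).
\]
Set $M:=\sup_{B(z_0,\eta/2)\cup B(w_0,\eta/2)}|f|$.

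For $A$, first differentiate in $w$ explicitly:
\[
\partial_w^b A(z,w)=b!\,\frac{f(z)}{(z-w)^{b+1}}.
\]
Fix $w=w_0$ and apply the Cauchy estimate in $z$ on the circle $|z-z_0|=\eta/2$. On this circle $|z-w_0|\ge 2\eta-\eta/2=3\eta/2$, so
\[
\bigl|\partial_z^a\partial_w^bA(z_0,w_0)\bigr|
\le a!\,(\eta/2)^{-a}\, b!\,M\,(3\eta/2)^{-(b+1)}
=2^{a+b+1}a!\,b!\,3^{-(b+1)}\eta^{-(a+b+1)}M .
\]
For $B$ the roles are reversed. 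Differentiating in $z$ explicitly gives
\[
\partial_z^a B(z,w)=(-1)^a a!\,\frac{f(w)}{(z-w)^{a+1}}.
\]
The Cauchy estimate in $w$ on $|w-w_0|=\eta/2$, where $|z_0-w|\ge 3\eta/2$, gives the same bound with $3^{-(a+1)}$ in place of $3^{-(b+1)}$.

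Adding the two bounds yields \eqref{eq:lemma-cauchy} with
\[
C_{a,b}=2^{a+b+1}a!\,b!\bigl(3^{-(a+1)}+3^{-(b+1)}\bigr).
\]
For the special case take $a=k-1$ and $b=k$. Then
\[
2^{2k}(k-1)!\,k!\,\bigl(3^{-k}+3^{-(k+1)}\bigr)=2^{2k}(k-1)!\,k!\cdot\frac{4}{3^{k+1}},
\]
which is \eqref{eq:lemma-cauchy-special}.

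The only delicate point is to avoid estimating $H_f$ jointly through $|H_f|\le 2M/\eta$. That route gives only the weaker constant $2^{a+b+1}a!\,b!$, and the sharper form is presumably what the later summation over $k$ in the comparison argument needs. Separating the two terms lets us use the better distance $3\eta/2$ instead of $\eta$ in the denominator, and that is the whole gain.
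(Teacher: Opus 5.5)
Your proof is correct and is essentially the paper's argument: the paper writes $H_f(z,w)=\frac{1}{2\pi\ri}\oint_\omega \frac{f(\zeta)\,\rd\zeta}{(\zeta-z)(\zeta-w)}$ over the union of the two circles. Its two circle contributions are exactly your pieces $f(z)/(z-w)$ and $-f(w)/(z-w)$, and it estimates them with the same distances $\eta/2$ and $3\eta/2$, giving the identical constant $C_{a,b}$. (One aside: the proof of \Cref{c:deterministic_comparison} fixes a single $k$ rather than summing over $k$, so any $k$-dependent constant would suffice there. The sharper form is needed only to match the constant stated in the lemma.)
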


\begin{proof}
With \(r=\eta/2\) and
\(\omega=\partial B(z_0,r)\cup\partial B(w_0,r)\), both circles oriented
counterclockwise, Cauchy's formula gives
\[
\partial_z^a\partial_w^bH_f(z_0,w_0)
=\frac{a!b!}{2\pi\ri}\oint_\omega
\frac{f(\zeta)\,\rd\zeta}
{(\zeta-z_0)^{a+1}(\zeta-w_0)^{b+1}}.
\]
On the first circle, \(|\zeta-z_0|=r\) and \(|\zeta-w_0|\geq3r\); the roles are
reversed on the second circle. Estimating the two integrals proves
\eqref{eq:lemma-cauchy}, and \(a=k-1,b=k\) gives
\eqref{eq:lemma-cauchy-special}.
\end{proof}

\begin{proof}[Proof of \Cref{c:deterministic_comparison}]
Set \(z_0=u+\ri\eta\), \(w_0=u-\ri\eta\), and
\(\Delta_n=m_{Y_n}-m_\mu\). Then
\begin{equation}\label{eq:det-local-H}
H_{\Delta_n}(z,w)
=\frac1n\sum_{i=1}^n\frac1{(\lambda_i^{(n)}-z)(\lambda_i^{(n)}-w)}
-\int_\bR\frac{\mu(\rd x)}{(x-z)(x-w)}.
\end{equation}
For every integer \(k\geq1\),
\begin{equation}\label{eq:det-local-derivative}
\partial_z^{k-1}\partial_w^kH_{\Delta_n}(z,w)
=k!(k-1)!\left[
\frac1n\sum_{i=1}^n
\frac1{(\lambda_i^{(n)}-z)^k(\lambda_i^{(n)}-w)^{k+1}}
-\int_\bR\frac{\mu(\rd x)}{(x-z)^k(x-w)^{k+1}}
\right].
\end{equation}
Since
\[
\Im\frac1{(t-z_0)^k(t-w_0)^{k+1}}
=-\frac{\eta}{((t-u)^2+\eta^2)^{k+1}},
\]
we have
\begin{equation}\label{eq:det-local-imaginary}
-\Im\partial_z^{k-1}\partial_w^kH_{\Delta_n}(z_0,w_0)
=k!(k-1)!\left[
\frac1n\sum_{i=1}^n
\frac{\eta}{((\lambda_i^{(n)}-u)^2+\eta^2)^{k+1}}
-\int_\bR\frac{\eta\,\mu(\rd x)}{((x-u)^2+\eta^2)^{k+1}}
\right].
\end{equation}

Suppose that some \(\lambda_i^{(n)}\in[u-\eta,u+\eta]\). The corresponding
summand is at least \(2^{-(k+1)}\eta^{-(2k+1)}\), while the last integral in
\eqref{eq:det-local-imaginary} is at most \(\eta\varepsilon^{-2k-2}\).
Schwarz symmetry extends \eqref{eq:deterministic_resolvent_bound} to the disc
around \(w_0\), so \Cref{l:wirtinger_calculus} gives
\begin{equation}\label{eq:det-local-contradiction}
2^{-(k+1)}
\leq C_k\delta_n+n\left(\frac{\eta}{\varepsilon}\right)^{2k+2}.
\end{equation}
By \eqref{eq:det_eta_eps_relation},
\begin{equation}\label{eq:det-local-scale}
n\left(\frac{\eta}{\varepsilon}\right)^{2k+2}
\leq n^{1-\fc(2k+2)}.
\end{equation}
Choose \(k\) so that \(1-\fc(2k+2)<0\). The right-hand side of
\eqref{eq:det-local-contradiction} tends to zero, contradicting its positive
left-hand side. Therefore, the claim follows.
\end{proof}

%% file: Random_matrix_example.tex
\subsection{Comparison with other resolvent exclusion arguments}
\label{s:comparison-resolvent-arguments}

It is useful to compare the criterion in \eqref{eq:deterministic_resolvent_bound} with the standard argument based on the imaginary part of the resolvent in random matrix theory. The latter is  common in the random matrix literature, but it requires the spectral scale \(\eta\) to be sufficiently small.

\begin{proposition}[Classical one-point criterion]\label{prop:classical_one_point}
Let \(z=u+\ri\eta\in\bH\), and assume that
$
\dist\bigl(u,\supp(\mu)\bigr)\ge \varepsilon.
$
Suppose that
\begin{equation}\label{eq:classical_local_law}
|m_{Y_n}(z)-m_{\mu}(z)|\ll \frac{1}{n\eta},\quad 
\Im m_{\mu}(z)\ll \frac{1}{n\eta},
\end{equation}
then
$
  \operatorname{spec}(Y_n)\cap[u-\eta,u+\eta]=\varnothing.
$
\end{proposition}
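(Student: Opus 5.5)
We argue by contradiction, using only the imaginary part of the Stieltjes transform at the single point \(z=u+\ri\eta\). We read the hypotheses \eqref{eq:classical_local_law} in the sense that both quantities are at most \(\delta_n/(n\eta)\) for some \(\delta_n\to0\), and we take \(n\) large enough that \(\delta_n<1/4\).

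The starting point is the explicit formula
\[
\Im m_{Y_n}(z)=\frac1n\sum_{i=1}^n\frac{\eta}{(\lambda_i^{(n)}-u)^2+\eta^2}.
\]
Every summand is nonnegative, so a single eigenvalue in \([u-\eta,u+\eta]\) already forces \(\Im m_{Y_n}(z)\) to be large.

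Suppose some \(\lambda_i^{(n)}\in[u-\eta,u+\eta]\). Then \((\lambda_i^{(n)}-u)^2+\eta^2\le 2\eta^2\), so the corresponding summand is at least \(1/(2\eta)\). Dropping all other nonnegative terms gives
\[
\Im m_{Y_n}(z)\ \ge\ \frac{1}{2n\eta}.
\]

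We now bound \(\Im m_{Y_n}(z)\) from above. Since \(\Im\) is \(1\)-Lipschitz for the modulus,
\[
\Im m_{Y_n}(z)\le \Im m_\mu(z)+|m_{Y_n}(z)-m_\mu(z)|\le \frac{2\delta_n}{n\eta}.
\]
Combining the two bounds yields \(1/2\le 2\delta_n\), which contradicts \(\delta_n<1/4\). Hence \(\operatorname{spec}(Y_n)\cap[u-\eta,u+\eta]=\varnothing\).

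There is no genuine obstacle; the only delicate point is interpretive. The second condition in \eqref{eq:classical_local_law} is a real constraint and not automatic. From \(\dist(u,\supp\mu)\ge\varepsilon\) we get
\[
\Im m_\mu(z)=\int\frac{\eta\,\mu(\rd x)}{(x-u)^2+\eta^2}\le \frac{\eta}{\varepsilon^2},
\]
so that condition holds as soon as \(\eta^2 n\ll\varepsilon^2\), that is, \(\eta\ll \varepsilon n^{-1/2}\). This requirement on the scale is the feature to contrast with \Cref{c:deterministic_comparison}. There the higher-order derivatives from \Cref{l:wirtinger_calculus} replace the kernel \(\eta/((t-u)^2+\eta^2)\) by its \((k+1)\)-th power. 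As a result the weaker relation \(\eta\le n^{-\fc}\varepsilon\) suffices, at the cost of requiring the comparison on a whole disc rather than at one point.
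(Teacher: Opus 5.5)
Your proof is correct and follows the paper's argument essentially verbatim: a single eigenvalue in $[u-\eta,u+\eta]$ forces $\Im m_{Y_n}(z)\ge 1/(2n\eta)$, while the triangle inequality bounds $\Im m_{Y_n}(z)$ by $|m_{Y_n}(z)-m_\mu(z)|+\Im m_\mu(z)\ll 1/(n\eta)$, which is a contradiction. Your quantification of $\ll$ via $\delta_n\to0$ and your closing remark on the scale condition $\eta\ll\varepsilon n^{-1/2}$ also match the paper's subsequent discussion.
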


\begin{proof}
Assume, by contradiction, that \(\lambda_j^{(n)}\in [u-\eta,u+\eta]\) for some \(j\). Then
\[
\Im m_{Y_n}(z)
=
\frac1n\sum_{i=1}^n \frac{\eta}{(\lambda_i^{(n)}-u)^2+\eta^2}
\geq
\frac1n\frac{\eta}{(\lambda_j^{(n)}-u)^2+\eta^2}
\ge
\frac{1}{2n\eta}.
\]
On the other hand, by \eqref{eq:classical_local_law}
\[
\Im m_{Y_n}(z)
\le
|m_{Y_n}(z)-m_{\mu}(z)|+\Im m_{\mu}(z)
\ll
\frac{1}{n\eta},
\]
a contradiction. Therefore no eigenvalue can lie in \([u-\eta,u+\eta]\).
\end{proof}

The point is that the one-point argument only works when \(\Im m_{\mu}(z)\) is itself much smaller than \((n\eta)^{-1}\). A crude bound gives
\[
\Im m_{\mu}(u+\ri\eta)
=
\int_{\R}\frac{\eta}{(x-u)^2+\eta^2}\,\mu(\rd x)
\le
\frac{\eta}{\varepsilon^2},
\]
since \(\dist(u,\supp\mu)\ge \varepsilon\). Hence a sufficient condition for the second statement in \eqref{eq:classical_local_law} is
\[
\frac{\eta}{\varepsilon^2}\ll \frac{1}{n\eta},
\qquad\text{that is}\quad
\eta \ll \frac{\varepsilon}{\sqrt n}.
\]
Thus the classical method already requires \(\eta\) to be at most of order \(n^{-1/2}\) when \(\varepsilon\asymp 1\).

Near a regular edge one can use the sharper estimate
\[
\Im m_{\mu}(E+\kappa+\ri\eta)\asymp \frac{\eta}{\sqrt{\kappa+\eta}},
\]
so the one-point argument requires
\[
\frac{\eta}{\sqrt{\kappa+\eta}} \ll \frac{1}{n\eta},
\qquad\text{equivalently}\qquad
n\eta^2 \ll \sqrt{\kappa+\eta}.
\]
When \(\kappa\gg \eta\), this becomes
\(
\eta \ll n^{-1/2}\kappa^{1/4}.
\)
In particular, if \(\kappa\asymp 1\), then one again needs \(\eta\ll n^{-1/2}\). The criterion in \eqref{eq:deterministic_resolvent_bound} does not rely on this one-point scale condition.

The spectral mechanism underlying
\Cref{c:deterministic_comparison} is closely related to the argument in
\cite[Section~5.5]{AGZ}. In their independent GUE setting,
\cite[Lemma~5.5.4]{AGZ} gives a pointwise comparison between the
expected finite-dimensional Stieltjes transform and its free limit on
a bounded strip, down to a polynomial imaginary scale.
\begin{equation}
  \left|
    \mathbb E m_{Y_N}(z)-m_{\mu_y}(z)
  \right|
  \le
  \frac{C}{N^2(\Im z)^{C'}},
  \qquad
  N^{-c'}\le \Im z\le c.
  \label{eq:agz-pointwise-resolvent}
\end{equation}
Using an almost-analytic functional calculus,
\cite[Lemma~5.5.5]{AGZ} then obtains
\[
  \left|
    \mathbb E\int_{\mathbb R}\phi\,\rd\mu_{Y_N}
    -
    \int_{\mathbb R}\phi\,\rd\mu_y
  \right|
  \le
  \frac{C(\phi)}{N^2}
\]
for every fixed smooth compactly supported function \(\phi\) vanishing
on \(\operatorname{spec}(y)\). After choosing \(\phi\ge0\) on a
forbidden region, positivity and Gaussian concentration yield
eventual almost-sure $\operatorname{spec}(Y_n) \to \operatorname{spec}(y)$.

Thus, the two arguments use the same basic principle: Stieltjes difference is converted into a positive spectral statistic that
cannot remain small when eigenvalues lie in some domain.
The form of the analytic input is different. The estimate
in \cite{AGZ} is global and averaged: it controls an expected
Stieltjes transform on an entire strip and consequently a large class
of smooth test functions. By contrast,
\Cref{c:deterministic_comparison} uses a local
implication to prevent eigenvalues lie in some small intervals. 

In the next section, \Cref{c:deterministic_comparison} can directly apply to the case of multi-cut Dyson Brownian
motion. Since the support of classical measure
\(
  S_t=\operatorname{supp}
  \bigl(\mu_0\boxplus\mu_{\rm sc}^{(t)}\bigr)
\)
moves with time and have several connected components. We will use \Cref{c:deterministic_comparison} to control each connected component for any $t$. And therefore, show a time-uniform spectral confinement statement.

%% file: DBM.tex
\section{Support of Dyson Brownian motion}
\label{s:dbm}

In this section we prove the main result \Cref{t:DBM_support}.
\subsection{Free Convolution with Semicircle Distributions}
\label{TransformConvolution}

    We recall the basic facts about Stieltjes transforms and free convolution
with a semicircle distribution. For a finite measure $\mu$ on
$\mathbb R$, its Stieltjes transform is
\begin{flalign}
		\label{mz0} 
		m(z) = \displaystyle\int_{-\infty}^{\infty} \displaystyle\frac{\mu (\rd x)}{x-z}.
	\end{flalign}
We have the following standard estimates.
\begin{lemma}\label{l:STproperty}
Let $m(z)=m_\mu(z)$ be the Stieltjes transform of a finite measure $\mu$ with $A=\mu(\bR)$. For any integer $p\geq 1$, we denote its $p$-th derivative by $m^{(p)}(z)$. Then, 
\begin{align}\label{e:stbb}
|m(z)|\leq \frac{A}{\dist(z,\supp(\mu))},\quad
\quad |m'(z)|\leq \frac{\Im[m(z)]}{\Im[z]}, 
\quad |m^{(p)}(z)|\leq \frac{p!A}{\dist(z,\supp(\mu))^{p+1}}
\end{align}
\end{lemma}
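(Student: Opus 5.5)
The three bounds follow directly from the integral representation $m(z)=\int \mu(\rd x)/(x-z)$, by differentiating under the integral sign and estimating the integrand pointwise. Write $d=\dist(z,\supp(\mu))$, which is positive for $z\notin\supp(\mu)$ and in particular for every $z\in\bC\setminus\bR$.

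For the first bound, every $x\in\supp(\mu)$ satisfies $|x-z|\geq d$. Hence
\[
|m(z)|\leq\int\frac{\mu(\rd x)}{|x-z|}\leq \frac{A}{d}.
\]

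For the third bound, we differentiate under the integral. This is justified because the integrand is smooth in $z$ and its derivatives are bounded uniformly on compact subsets of $\bC\setminus\supp(\mu)$, while $\mu$ is finite. Differentiating $p$ times gives
\[
m^{(p)}(z)=p!\int\frac{\mu(\rd x)}{(x-z)^{p+1}},
\]
and estimating $|x-z|^{-(p+1)}\leq d^{-(p+1)}$ yields $|m^{(p)}(z)|\leq p!A\,d^{-(p+1)}$.

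For the second bound, write $z=E+\ri\eta$ with $\eta>0$; the case $\eta<0$ follows by conjugation, interpreting the estimate with absolute values. The key identity is
\[
\Im\frac{1}{x-z}=\frac{\eta}{|x-z|^2},
\qquad\text{so}\qquad
\Im m(z)=\eta\int\frac{\mu(\rd x)}{|x-z|^2}.
\]
On the other hand, $m'(z)=\int\mu(\rd x)/(x-z)^2$, so
\[
|m'(z)|\leq\int\frac{\mu(\rd x)}{|x-z|^2}=\frac{\Im m(z)}{\Im z}.
\]

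None of these steps poses a real obstacle; the argument is routine. The only points requiring care are the justification for differentiating under the integral, which follows from dominated convergence since $\mu$ is finite and the integrand is uniformly bounded away from the support, and the sign convention $\Im z>0$ in the second estimate.
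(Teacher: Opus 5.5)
Your proof is correct. The paper records these bounds as standard estimates without proof, and your argument via the integral representation is exactly the routine verification being taken for granted: the pointwise bound $|x-z|\geq \dist(z,\supp\mu)$ on the support, the formula $m^{(p)}(z)=p!\int\mu(\rd x)(x-z)^{-p-1}$, and the identity $\Im m(z)=\Im z\int\mu(\rd x)|x-z|^{-2}$. As a minor remark, that last identity holds for either sign of $\Im z$, so the middle bound follows for all $z\in\bC\setminus\bR$ without any conjugation argument.
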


	The \emph{semicircle distribution} is a probability measure $\mu_{\semci}$ whose density $\varrho_{\semci} : \mathbb{R} \rightarrow \mathbb{R}_{\ge 0}$ with respect to the Lebesgue measure is given by 
	\begin{flalign}
		\label{rho1} 
		\varrho_{\semci} (x) = \displaystyle\frac{(4-x^2)^{1/2}}{2\pi} \cdot \textbf{1}_{x \in [-2, 2]}, \qquad \text{for all $x \in \mathbb{R}$}. 
	\end{flalign}
	
	\noindent For any $t > 0$, we denote by $\varrho_{\semci}^{(t)}$ and $\mu_{\semci}^{(t)}$ the rescaled semicircle density and probability measure, respectively.
	\begin{flalign}
		\label{rhosct}
		\varrho_{\semci}^{(t)} (x) = t^{-1/2} \varrho_{\semci} (t^{-1/2} x); \qquad \mu_{\semci}^{(t)} = \varrho_{\semci}^{(t)} (x) d x.
	\end{flalign}
For any finite measure $\mu$ on
$\mathbb R$, denote its Stieltjes transform $m(z)$. For any $t > 0$, define the function $F_t = z - t m(z): \mathbb{H} \rightarrow \mathbb{C}$ and the set $\Lambda_t \subseteq \mathbb{H}$ by 
	\begin{align}\label{mtlambdat} 
		 \Lambda_t = \Big\{ z \in \mathbb{H} : \Im \big( z - tm (z) \big) > 0 \Big\} = \Bigg\{ z \in \mathbb{H} : \int_{-\infty}^{\infty} \frac{\mu(\rd x)}{|z-x|^2} < \frac{1}{t} \Bigg\}.
	\end{align}

\begin{lemma}[{\cite[Lemma 4]{FCSD}}] \label{mz} 
$F_t$ is a homeomorphism from $\overline{\Lambda}_t$ to $\overline{\mathbb{H}}$. Moreover, it is a holomorphic map from $\Lambda_t$ to $\mathbb{H}$ and a bijection from $\partial \Lambda_t$ to $\mathbb{R}$. 
\end{lemma}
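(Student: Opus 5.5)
The plan is to follow Biane's original argument: first describe $\Lambda_t$ as the region above a continuous graph, then show that $F_t$ maps this graph monotonically onto $\mathbb R$, and finally get bijectivity in the interior from the argument principle.

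\textbf{Step 1: $\Lambda_t$ is an epigraph.} For $x\in\mathbb R$ and $v\ge 0$ set
\[
I(x,v):=\int\frac{\mu(\rd y)}{(x-y)^2+v^2}\in(0,\infty].
\]
For fixed $x$, the map $v\mapsto I(x,v)$ is strictly decreasing and continuous on $(0,\infty)$, and it tends to $0$ as $v\to\infty$. Hence
\[
v_t(x):=\inf\{v\ge0: I(x,v)\le 1/t\}
\]
is finite, and $\Lambda_t=\{x+\ri y: y>v_t(x)\}$. Moreover $v_t(x)>0$ exactly when $I(x,0)>1/t$, and in that case $I(x,v_t(x))=1/t$.

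Continuity of $v_t$ follows from joint continuity of $I$ on $\{v>0\}$ together with strict monotonicity in $v$, via an implicit-function-type argument. At points where $v_t(x)=0$, I would use lower semicontinuity of $I(\cdot,0)$ (Fatou) to control the limit. Consequently $\overline{\Lambda}_t=\{y\ge v_t(x)\}$ and $\partial\Lambda_t$ is the graph $\gamma(x)=x+\ri v_t(x)$.

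On $\partial\Lambda_t$ the map $F_t$ extends continuously and is real-valued. If $v_t(x)>0$, then $\Im F_t(\gamma(x))=v_t(x)\bigl(1-tI(x,v_t(x))\bigr)=0$. If $v_t(x)=0$, then $\int\mu(\rd y)/(x-y)^2\le 1/t<\infty$, so $m$ extends continuously to $x$ with real boundary value by dominated convergence. Holomorphy of $F_t$ on $\Lambda_t\subset\bH$ is immediate, and $F_t(\Lambda_t)\subset\bH$ by the second description of $\Lambda_t$ in \eqref{mtlambdat}.

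\textbf{Step 2: the boundary map is an increasing bijection onto $\mathbb R$.} Set
\[
\psi(x):=F_t(\gamma(x))=x-t\int\frac{(y-x)\,\mu(\rd y)}{(x-y)^2+v_t(x)^2}.
\]
Since $\mu$ is finite, $\psi(x)=x+O(1/|x|)$ as $|x|\to\infty$, so $\psi(x)\to\pm\infty$ as $x\to\pm\infty$. The key claim is that $\psi$ is strictly increasing, which I expect to be the main obstacle.

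For $x<x'$, write $z=\gamma(x)$ and $z'=\gamma(x')$. Using the resolvent identity,
\[
\frac{F_t(z')-F_t(z)}{z'-z}=1-t\int\frac{\mu(\rd y)}{(y-z)(y-z')}.
\]
By Cauchy--Schwarz,
\[
\Bigl|t\int\frac{\mu(\rd y)}{(y-z)(y-z')}\Bigr|\le t\,I(x,v_t(x))^{1/2}\,I(x',v_t(x'))^{1/2}\le1,
\]
since each factor $I$ is at most $1/t$ on the boundary. So this difference quotient has nonnegative real part. Because $F_t(z)$ and $F_t(z')$ are real, this forces $\psi(x')-\psi(x)$ to have the sign of $\Re(z'-z)=x'-x>0$. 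The exception would be equality in Cauchy--Schwarz, and I would rule out degenerate equality separately: it would force $z=z'$ or $\mu$ to be concentrated in a way incompatible with $x\ne x'$.

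\textbf{Step 3: bijectivity in the interior.} Fix $w\in\bH$ and take a large contour consisting of $\partial\Lambda_t\cap\{|z|\le R\}$ closed by an arc of $\{|z|=R\}$. Since $F_t(z)=z+O(1/|z|)$ at infinity and $F_t$ maps $\partial\Lambda_t$ homeomorphically and increasingly onto $\mathbb R$, the image curve winds exactly once around $w$ for large $R$. The argument principle then shows that $F_t(z)=w$ has exactly one solution in $\Lambda_t$.

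Combining the three steps, $F_t$ is a continuous bijection $\overline{\Lambda}_t\to\overline{\bH}$. It is properly behaved at infinity, since $|F_t(z)|\to\infty$ as $|z|\to\infty$. A continuous bijection that is proper between these locally compact spaces is a homeomorphism; alternatively, continuity of the inverse follows from invariance of domain in the interior together with Step 2 on the boundary.
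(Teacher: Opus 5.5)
The paper gives no proof of this lemma; it is quoted from Biane's work, so there is no internal argument to compare against. Your proposal is a correct self-contained proof. It uses the epigraph description of $\Lambda_t$ through $v_t$, monotonicity of the boundary map via the resolvent identity and Cauchy--Schwarz, a degree argument in the interior, and properness for continuity of the inverse. Several steps need a more precise justification.

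\textbf{Continuity at real boundary points.} Take $x_0$ with $v_t(x_0)=0$. Continuity of $F_t$ on $\overline{\Lambda}_t$ at $x_0$ is not justified by dominated convergence at $x_0$ alone. You need control uniform over nearby $z\in\overline{\Lambda}_t$, including nearby boundary points $x+\ri v_t(x)$. This is exactly what makes $\psi$ continuous, and hence onto $\bR$ via the intermediate value theorem. The fix is the uniform bound
\[
I(z):=\int|y-z|^{-2}\mu(\rd y)\le 1/t,
\]
valid at every $z\in\overline{\Lambda}_t$. It gives $\int_{|y-z|<\delta}|y-z|^{-1}\mu(\rd y)\le\delta/t$ uniformly in $z$, i.e.\ uniform integrability. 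Likewise, continuity of $v_t$ at a zero $x_0$ comes from $I(x_0,h)<1/t$ together with continuity of $I(\cdot,h)$ for $h>0$, not from Fatou.

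\textbf{Equality case in Cauchy--Schwarz.} This closes in one line:
\[
\int\Bigl|\frac{1}{y-z}-\frac{1}{y-\bar z'}\Bigr|^2\mu(\rd y)
=I(z)+I(z')-2\Re\int\frac{\mu(\rd y)}{(y-z)(y-z')}.
\]
So $t\int\frac{\mu(\rd y)}{(y-z)(y-z')}=1$ makes the right side at most $0$. This forces $z=\bar z'$, hence $z=z'\in\bR$.

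\textbf{Behaviour at infinity.} The expansion $\psi(x)=x+O(1/|x|)$ can fail for a general finite $\mu$. However, Cauchy--Schwarz gives $|m|\le(A/t)^{1/2}$ on $\overline{\Lambda}_t$, with $A=\mu(\bR)$. This yields $\psi(x)=x+O(1)$ and $|F_t(z)|\ge|z|-(tA)^{1/2}$, which is all you use.

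\textbf{A simplification of Step~3.} The bound $I\le 1/t$ holds on all of $\overline{\Lambda}_t$, not just on the boundary. So your Step~2 computation, combined with the identity above, shows directly that $F_t$ is injective on the whole closed domain. Surjectivity onto $\bH$ then follows softly. $F_t(\Lambda_t)$ is open by the open mapping theorem. It is relatively closed in $\bH$ by properness together with $F_t(\partial\Lambda_t)\subset\bR$. Hence it is all of $\bH$. This replaces the argument principle on a contour that is only a continuous graph, with $F_t$ merely continuous up to it, which would otherwise need an approximation argument.
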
 

	For any $t \ge 0$, define $m_t: \mathbb{H} \rightarrow \mathbb{H}$ as
    \begin{flalign}
		\label{mt} 
		m_t \big( z - t m_0 (z) \big) = m (z), \qquad \text{for any $z \in \Lambda_t$}.
	\end{flalign}
    Note that $m_0 (z) = m(z)$.
	By \cite[Proposition 2]{FCSD}, $m_t$ is the Stieltjes transform of a probability measure $\mu_t$. We call this measure to be the \emph{free convolution} between $\mu$ and $\mu_{\semci}^{(t)}$, which denote by $\mu_t = \mu \boxplus \mu_{\semci}^{(t)}$. By \cite[Corollary 2]{FCSD}, $\mu_t$ has a density $\varrho_t = \varrho_t^{\mu} : \mathbb{R} \rightarrow \mathbb{R}_{\ge 0}$ with respect to Lebesgue measure for $t > 0$.	

\subsection{Characteristic flow}\label{s:Cf}
Recall the $m_t(z)$ in \eqref{mt},
we then introduce the characteristic flow:
\begin{align}\label{e:flow}
z_t(u)=u-tm_0(u)=q_t(u)+\ri\eta_t(u),\quad u\in \Lambda_t,
\end{align}
satisfying the property that 
\begin{flalign}\label{e:m0=ms}
	\partial_tm_t (z_t)=0 \qquad \text{$t \ge 0$},
\end{flalign} 
When the context is clear, we will simply write $z_t(u), q_t(u), \eta_t(u)$ as $z_t, q_t, \eta_t$. 
Thus, along the char flow, we have 
\begin{align}\label{e:etas}
z_s=z_t+(t-s)m_t(z_t),\quad  \eta_s=\eta_t+(t-s) \Im[m_t(z_t)]\geq \eta_t,
\end{align}
where the last inequality follows from $\Im[m_t(z_t)]=\Im[m_0(u)]\geq 0$.

For each \(s>0\), write
\[
S_s=\supp(\mu_s)
=
\bigcup_{j=1}^{M(s)}[a_j(s),b_j(s)],
\qquad
a_1(s)<b_1(s)<\cdots<a_{M(s)}(s)<b_{M(s)}(s).
\]
By \cite{FCSD},
\(M(s)\) is non-increasing in \(s\), since different components of the
support may merge. Whenever we differentiate individual edges, we work
on an open interval \(I\subset(0,T]\) on which \(M(s)\equiv M\) is
constant and we call all edges are regular.

\begin{lemma}\label{c:edge}
    For each $1\le j\le M$, the regular edges of $\supp(\mu_s)$ satisfy
\begin{equation} \label{eq:physical-edge-speeds}
  a_j'(s)=-m_s(a_j(s)),
  \qquad
  b_j'(s)=-m_s(b_j(s)).
\end{equation}
\end{lemma}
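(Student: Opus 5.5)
The plan is to characterize the edges as boundary points of the characteristic flow and then differentiate that characterization in $s$.

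\textbf{Edge characterization.} By \Cref{mz}, $F_s(z)=z-s\,m_0(z)$ maps $\partial\Lambda_s$ bijectively onto $\mathbb R$. For $x\in\mathbb R$, the density $\varrho_s(x)$ is positive exactly when the preimage $F_s^{-1}(x)$ lies in $\mathbb H$. It vanishes when the preimage is a real point $y$ outside $\supp\mu_0$ with $s\,m_0'(y)<1$. A regular edge $e(s)$ (either $a_j(s)$ or $b_j(s)$) is therefore the image of a real point $y(s)\notin\supp\mu_0$ satisfying
\[
  e(s)=y(s)-s\,m_0(y(s)),
  \qquad
  s\,m_0'(y(s))=1 .
\]
The second equation expresses that $F_s$ has a critical point on $\mathbb R$; this is the boundary condition $\int\mu_0(\rd x)/(x-y)^2=1/s$ from \eqref{mtlambdat} with $\Im y=0$. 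Moreover $m_s(e(s))=m_0(y(s))$ by \eqref{mt}, extended continuously to the boundary. At the edge this quantity is real and finite, since $m_0$ is analytic near $y(s)$.

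\textbf{Differentiation.} I then differentiate the edge formula in $s$:
\[
  e'(s)=y'(s)\bigl(1-s\,m_0'(y(s))\bigr)-m_0(y(s)).
\]
The bracket vanishes by the critical-point condition, so
\[
  e'(s)=-m_0(y(s))=-m_s(e(s)),
\]
which is \eqref{eq:physical-edge-speeds}. Equivalently, the edge moves along the real characteristic \eqref{e:etas} with $\eta\equiv0$, namely $e(s)=e(t)+(t-s)m_t(e(t))$. The derivative of $y$ is irrelevant because it is multiplied by zero.

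\textbf{Main obstacle: differentiability of $y(s)$.} On the interval $I$ where $M$ is constant and the edges are regular, $y(s)$ is defined by $G(s,y)=s\,m_0'(y)-1=0$. The relevant derivative is
\[
  \partial_yG=s\,m_0''(y)=2s\int\frac{\mu_0(\rd x)}{(x-y)^3}.
\]
A regular edge means this quantity is nonzero; it is exactly the square-root (nondegenerate) situation, as opposed to a cusp. The implicit function theorem then gives a $C^1$ (indeed analytic) $y(s)$, and hence $C^1$ edges.

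To identify $y(s)$ as a local minimum of $m_0'$ on the real gap, I will use that $m_0'(y)=\int(x-y)^{-2}\mu_0(\rd x)$ is strictly convex there. This shows that each gap of $\{m_0'<1/s\}$ has exactly two boundary points with $m_0''\neq0$, except at merging times, which are excluded from $I$. If needed, I would instead simply take the nondegeneracy $m_0''(y(s))\neq0$ as the meaning of ``regular edge''.
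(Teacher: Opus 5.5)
Your argument is essentially the paper's proof: you write the regular edge as $F_s(y(s))$ for a nondegenerate real critical point $y(s)$. Differentiating, the $y'(s)$ term is killed by $s\,m_0'(y(s))=1$, leaving $-m_0(y(s))=-m_s(e(s))$. Your implicit-function-theorem justification that $y(s)$ is differentiable is a detail the paper leaves implicit.

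One side remark is wrong, though your proof does not use it. The edge is the envelope of the characteristics, not itself a characteristic: $m_0'(y(s))=1/s$ forces $y(s)$, and hence $m_s(e(s))=m_0(y(s))$, to vary with $s$. So $e(s)=e(t)+(t-s)m_t(e(t))$ fails for $s\neq t$; for $\mu_0=\delta_0$ it would read $2\sqrt{s}=\sqrt{t}+s/\sqrt{t}$, which holds only at $s=t$.
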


\begin{proof}
By \cite{FCSD}, the support of the semicircular free convolution
identifies every regular edge with a nondegenerate real critical point
of $F_s$ in \eqref{mtlambdat}. For each $1\le j\le M$, there are real-valued functions
$\alpha_j(s)$ and $\beta_j(s)$ such that
\begin{equation}
  s m_0'(\alpha_j(s))=s m_0'(\beta_j(s))=1,
  \qquad
  m_0''(\alpha_j(s))>0,
  \qquad
  m_0''(\beta_j(s))<0.
  \label{eq:regular-edge-critical-points}
\end{equation}
Since
$$
  a_j(s)=F_s(\alpha_j(s)),
  \qquad
  b_j(s)=F_s(\beta_j(s)),
$$
differentiating
yields
\begin{align*}
  a_j'(s)
  &=\alpha_j'(s)-m_0(\alpha_j(s))
    -s m_0'(\alpha_j(s))\alpha_j'(s)
   =-m_0(\alpha_j(s))=-m_s(a_j(s)).
\end{align*}
This proves
\eqref{eq:physical-edge-speeds}.
\end{proof}

\begin{remark}[Merging of adjacent edges]\label{r:edge-merging}
Lemma~\ref{c:edge} is local in time: it applies on intervals on which
the number of support components is constant and the corresponding
critical points are nondegenerate.
When two adjacent interior edges merge at a time
\(s_\mathrm{c}\), namely
\[
b_j(s),\,a_{j+1}(s)
\longrightarrow x_\mathrm{c},
\qquad s\uparrow s_\mathrm{c}.
\]
Then their real critical preimages satisfy
\[
\beta_j(s),\,\alpha_{j+1}(s)
\longrightarrow q_\mathrm{c},
\]
where
\begin{equation}\label{eq:merging-critical-point}
  s_\mathrm{c}m_0'(q_\mathrm{c})=1,
  \qquad
  m_0''(q_\mathrm{c})=0,
  \qquad
  x_\mathrm{c}=F_{s_\mathrm{c}}(q_\mathrm{c}).
\end{equation}
Indeed,
\[
\lim_{s\uparrow s_\mathrm{c}}b_j'(s)
=
\lim_{s\uparrow s_\mathrm{c}}a_{j+1}'(s)
=
-m_0(q_\mathrm{c}).
\]
Hence the two edges meet with the same first-order velocity. For
\(s>s_\mathrm{c}\), these two interior endpoints no longer exist as
separate edges: the two adjacent support components have merged, and
\(M(s)\) decreases by one.

Moreover, there is a cusp-merging behavior
that \[
a_{j+1}(s)-b_j(s)
\asymp
(s_\mathrm{c}-s)^{3/2},
\qquad s\uparrow s_\mathrm{c}.
\]
see
\cite[Proposition~3 and the discussion on
pp.~714--715]{FCSD}.
\end{remark}

\begin{comment}
\begin{remark}[Hausdorff continuity of the reference support]
\label{r:reference-support-continuity}
In the reduced free-product realization, let \(\mathsf a\) have law \(\mu_0\)
and let \(\mathsf s\) be a standard semicircular element free from
\(a\). Since the free-product state is faithful,
\[
  S_s=\spec\bigl(\mathsf a+\sqrt{s}\,\mathsf s\bigr),
  \qquad s\ge0.
\]
Hence the spectral variation inequality gives
\begin{equation}
  d_H(S_s,S_r)
  \le
  \left\|
  (\mathsf a+\sqrt{s}\,\mathsf s)-(\mathsf a+\sqrt{r}\,\mathsf s)
  \right\|
  \le 2|\sqrt{s}-\sqrt{r}|.
  \label{eq:reference-support-continuity}
\end{equation}
In particular, \(s\mapsto S_s\) is continuous in the Hausdorff metric
on \([0,\sfT]\), and
\[
  S_s\subset[-K_0-2\sqrt{\sfT},K_0+2\sqrt{\sfT}],
  \qquad 0\le s\le\sfT.
\]
\end{remark}
\end{comment}

\subsection{Proof of \texorpdfstring{\Cref{t:DBM_support}}{Proposition 4.1}}\label{s:prove31}
Throughout this subsection we fix a small constant \(0<\mathfrak c<1/2\), \(\delta_n \to 0\), and set
\(
\eta_*:=n^{-\mathfrak c}.
\)
Given $\beta$-Dyson Brownian motion, 
for $0\le t\le \sfT$, define the closed forbidden set as the union of distinct points
\begin{equation}
  \mathcal F_t
  :=\left\{x\in \bR:
  \operatorname{dist}(x,S_t)=\eps\right\}.
  \label{eq:forbidden-set-time-t}
\end{equation}
For $t\in[0,\sfT]$ and $x\in\mathcal F_t$, define the transported
spectral domain
\begin{equation}\label{eq:defDt-clean}
  D_s^{(t,x)}
  :=z_s\circ z_t^{-1}\left(B(x+\ri\eta_*,\eta_*/2)\right),
  \qquad 0\le s\le t,
\end{equation}
and set $D_s^{(t,x)}:=\varnothing$ for $s>t$. 
Denote the empirical Stieltjes transform of the particle configuration at time \(s\) by
\[
\widetilde m_s(z)=\frac1n\sum_{i=1}^n\frac{1}{\lambda_i(s)-z},
\qquad z\in \bH.
\]
Recall that \(m_s\) denotes the Stieltjes transform of the deterministic measure \(\mu_s\) introduced above.
Define the stopping time
\begin{equation}\label{eq:stopping-clean}
  \sigma
  :=\sf T\wedge\inf\left\{
  \begin{array}{l}
  s\ge0:\ \exists\,t\in[0,T],\ \exists\,x\in\mathcal F_t,
  \ \exists\,w\in D_s^{(t,x)}\\[1mm]
  \hspace{29mm}\text{such that }
  |\widetilde m_s(w)-m_s(w)|
  \ge\dfrac{\delta_n}{n\operatorname{Im}w}
  \end{array}
  \right\}.
\end{equation}
Since $\mu_0$ is the initial empirical measure, we have
\begin{equation}
  \widetilde m_0(z)=m_0(z),
  \qquad z\in\mathbb H.
  \label{eq:exact-initial-stieltjes}
\end{equation}

\subsubsection*{Step 1. Geometry of the transported domains}

Fix \(0<t\le \sfT\), \(x\in\mathcal F_t\), and
\(u\in D_0^{(t,x)}\). Suppose first that \(x\) lies in an interior gap
\[
  x\in \bigl(b_j(t),a_{j+1}(t)\bigr)
\]
for some index \(j\). By the definition of \(\mathcal F_t\), either
\[
  x=b_j(t)+\eps
  \qquad\text{or}\qquad
  x=a_{j+1}(t)-\eps.
\]
The exterior components are treated analogously, with only one adjacent
boundary edge. %The case \(t=0\) will be recovered at the end by the
%Hausdorff continuity in
%\Cref{r:reference-support-continuity}.

The edge calculations below are initially carried out on maximal
backward intervals on which the relevant boundary edges remain regular,
so that \Cref{c:edge} applies. As a consequence of
\Cref{lem:multicut-reference-geometry}, 
the relevant gap cannot close along the backward evolution, and hence
$b_j(r),a_{j+1}(r)$ cannot merge for \(0<r\le t\). 

The following lemma records the elementary geometry of the transported
domains.

\begin{lemma}
\label{l:geometry-clean}
There exists a constant \(C=C(\eps,\sfT,K_0)>1\) such that the following
holds for all \(0\le r\le t\le\sfT\), provided \(n\) is sufficiently
large. Fix \(x\in\mathcal F_t\) and \(u\in D_0^{(t,x)}\), and set
\[
  q_r(u):=\Re z_r(u),
  \qquad
  \eta_r(u):=\Im z_r(u),
  \qquad
  d_r(u):=\dist\bigl(q_r(u),S_r\bigr),
\]
\[
  v(u):=\Im m_0(u)=\Im m_r(z_r(u)).
\]
Then
\begin{equation}
  C^{-1}\eta_*\le v(u)\le C\eta_*,
  \qquad
  C^{-1}\eta_*\le \eta_r(u)\le C\eta_*,
  \qquad
  |q_r(u)|\le C.
  \label{eq:geometry-2}
\end{equation}
\end{lemma}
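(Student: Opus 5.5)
We work throughout with the image point \(w:=z_t(u)\in B(x+\ri\eta_*,\eta_*/2)\). By \eqref{e:m0=ms}, \(m_r(z_r(u))=m_0(u)=m_t(w)\) for every \(0\le r\le t\). By \eqref{e:etas}, \(z_r(u)=w+(t-r)m_t(w)\).

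\textbf{Bounds on \(v(u)\).} Since \(\Re w=x\pm O(\eta_*)\) and \(\dist(x,S_t)=\eps\), we have \(\dist(w,S_t)\ge \eps/2\) for \(n\) large. Then
\[
v(u)=\Im m_t(w)=\int\frac{\Im w\,\mu_t(\rd y)}{|y-w|^2},
\]
with \(\Im w\in[\eta_*/2,3\eta_*/2]\) and \(|y-w|\le |x|+K_0+2\sqrt{\sfT}+1\) on \(S_t\). This gives
\[
c\,\eta_*\le v(u)\le 4\eps^{-2}\cdot\tfrac32\eta_*,
\]
once we bound \(|x|\). For that we use \(S_t\subset[-K_0-2\sqrt{\sfT},K_0+2\sqrt{\sfT}]\), which holds since \(\mu_t\) is the law of \(\mathsf a+\sqrt t\,\mathsf s\). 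Hence \(|x|\le K_0+2\sqrt{\sfT}+\eps\).

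\textbf{Bounds on \(\eta_r(u)\).} From \(\eta_r=\Im w+(t-r)v(u)\) we get
\[
\eta_*/2\le \eta_r\le \tfrac32\eta_*+\sfT C\eta_*,
\]
so the upper bound is immediate. The lower bound is also clear, because \(\eta_r\ge \eta_t\).

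\textbf{Bound on \(q_r(u)\).} We have \(q_r=\Re w+(t-r)\Re m_t(w)\). By \Cref{l:STproperty}, \(|m_t(w)|\le 2/\eps\), so
\[
|q_r|\le K_0+2\sqrt{\sfT}+\eps+1+2\sfT/\eps=:C .
\]

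All three bounds are therefore direct consequences of the flow identities. The only real input is \(\dist(w,S_t)\gtrsim\eps\), which controls both \(|m_t|\) and the Poisson integral. No separation of \(q_r\) from \(S_r\) is asserted here: the quantity \(d_r(u)\) is only defined, and presumably estimated later via \Cref{lem:multicut-reference-geometry}. So there is no serious obstacle; the one point needing care is that \(u\in D_0^{(t,x)}\) genuinely lies in \(\Lambda_t\), so that \(z_t^{-1}\) is well defined. This follows from \Cref{mz}, since \(w\in\bH\). The case \(t=0\) is trivial, as then \(u=w\).
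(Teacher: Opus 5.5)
Your proposal is correct and follows the paper's proof essentially step by step. Both arguments use a two-sided Poisson-integral bound on $v(u)=\Im m_t(z_t(u))$, coming from $\dist(z_t(u),S_t)\ge\eps/2$ and a uniform support bound, and then read off $\eta_r$ and $q_r$ from $z_r(u)=z_t(u)+(t-r)m_t(z_t(u))$ together with $|m_t(z_t(u))|\le 2/\eps$. The one difference is how the uniform support bound is justified: you derive $S_t\subset[-K_0-2\sqrt{\sfT},K_0+2\sqrt{\sfT}]$ from the operator norm of $\mathsf a+\sqrt t\,\mathsf s$, which is cleaner and visibly uniform in $n$, whereas the paper only appeals to continuity of the edges.
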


\begin{proof}
Since \(z_t(u)\in B(x+\ri\eta_*,\eta_*/2)\), we have
\begin{equation}
  \frac12\eta_*\le \eta_t(u)\le \frac32\eta_*,
  \qquad
  \left|d_t(u)-\eps\right|\le \frac{\eta_*}{2},
  \label{eq:geometry-1}
\end{equation}
In particular, \(d_t(u)\ge\eps/2\) for all sufficiently large \(n\). By \Cref{c:edge}, ach edge evolves continuously in time. Hence, there exists
\(K=K(\sfT,K_0)>0\) such that
\(
  S_s\subset[-K,K],
  0\le s\le\sfT.
\)
Since \(x\in\mathcal F_t\), this also gives
\(
  |q_t(u)|\le K+\eps+1
\)
for all sufficiently large \(n\). Using \eqref{e:m0=ms},
\[
  v(u)
  =\Im m_t(z_t(u))
  =\int_{\mathbb R}
  \frac{\eta_t(u)}
  {(y-q_t(u))^2+\eta_t(u)^2}\,\mu_t(dy).
\]
For all sufficiently large \(n\), 
therefore we have
\[
  \frac{\eta_t(u)}{(2K+\eps+1)^2+1}
  \le v(u)
  \le \frac{\eta_t(u)}{d_t(u)^2}
  \le \frac{4\eta_t(u)}{\eps^2}.
\]
Thus \(v(u)\asymp\eta_*\) by \eqref{eq:geometry-1}.

Next, \eqref{e:etas} gives
\[
  \eta_r(u)=\eta_t(u)+(t-r)v(u),
  \qquad 0\le r\le t,
\]
which proves the bounds on \(\eta_r(u)\). Moreover,
\[
  q_r(u)=q_t(u)+(t-r)\Re m_t(z_t(u)).
\]
By \Cref{l:STproperty} and \(d_t(u)\ge\eps/2\),
\[
  |m_t(z_t(u))|
  \le \frac{1}{\dist(z_t(u),S_t)}
  \le \frac{2}{\eps}.
\]
This proves the uniform bound on \(q_r(u)\), and hence
\eqref{eq:geometry-2}.
\end{proof}

The next lemma provides control on the distance between the characteristic flows and the forbidden set.
\begin{lemma}
\label{lem:multicut-reference-geometry}
There exists a constant \(c=c(\eps,\sfT,K_0)>0\) such that, for every
\(0\le r\le t\le\sfT\), every \(x\in\mathcal F_t\), and every
\(u\in D_0^{(t,x)}\),
\begin{equation}
  \dist\bigl(\Re z_r(u),S_r\bigr)
  \ge
  \eps-\frac{\eta_*}{2}
  +c\eps(t-r),
  \label{eq:multicut-real-margin}
\end{equation}
provided \(n\) is sufficiently large.
\end{lemma}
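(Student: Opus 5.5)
The plan is to compare the backward motion of the real part of the characteristic with the motion of the two edges bounding the gap that contains it, and to use the strict monotonicity of \(m_r\) on a real gap of \(S_r\) to get a linear gain. Fix \(x\in\mathcal F_t\) in the gap \((b_j(t),a_{j+1}(t))\), say \(x=b_j(t)+\eps\); the case \(x=a_{j+1}(t)-\eps\) and the exterior cases are symmetric, with only one adjacent edge. By \eqref{e:m0=ms}, \(m_r(z_r(u))=m_0(u)\) is constant in \(r\). Hence \(\partial_r q_r(u)=-\Re m_r(z_r(u))\). By \Cref{c:edge}, \(\partial_r b_j(r)=-m_r(b_j(r))\) and \(\partial_r a_{j+1}(r)=-m_r(a_{j+1}(r))\). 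I would track
\[
g^-(r):=q_r(u)-b_j(r),\qquad g^+(r):=a_{j+1}(r)-q_r(u),
\]
and show that both grow at rate at least \(c\eps\) as \(r\) decreases. At \(r=t\) we have \(\min(g^-,g^+)\ge\eps-\eta_*/2\) by \eqref{eq:geometry-1}.

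\emph{Key estimate.} Suppose \(q:=q_r(u)\) lies in a gap \((b,a)\) of \(S_r\) at distance at least \(\eps/2\) from both edges. Then
\[
m_r(q)-m_r(b)=(q-b)\int\frac{\mu_r(dy)}{(y-q)(y-b)}.
\]
The integrand is positive for every \(y\in S_r\), since \(y\le b<q\) or \(y\ge a>q\). By \Cref{l:geometry-clean} and \(S_r\subset[-K,K]\), it is bounded below by \((2K+C)^{-2}\). So \(m_r(q)-m_r(b)\ge c_0\eps\), and similarly \(m_r(a)-m_r(q)\ge c_0\eps\). Next, \(\Re m_r(q+\ri\eta_r)-m_r(q)=O(\eta_r^2/\eps^3)=O(\eta_*^2)\), using \(\eta_r\asymp\eta_*\) from \Cref{l:geometry-clean} and \Cref{l:STproperty}. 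This gives
\[
\partial_r g^-(r)=-(\Re m_r(z_r)-m_r(b_j(r)))\le -c_0\eps+O(\eta_*^2),
\]
and likewise \(\partial_r g^+(r)\le -c_0\eps+O(\eta_*^2)\). Integrating from \(r\) to \(t\) yields \(g^\pm(r)\ge\eps-\eta_*/2+c\eps(t-r)\) with \(c=c_0/2\), for \(n\) large.

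\emph{Bootstrap and global structure.} The estimate needs \(q_r\) to remain in the gap at distance at least \(\eps/2\) from its edges, and the edges to stay regular. I would run a continuity argument on the maximal interval \([r_0,t]\) on which this holds. On that interval the bound above strictly improves the hypothesis, so \(r_0=0\). In particular \(g^-+g^+=a_{j+1}(r)-b_j(r)\) stays at least \(2\eps\) and grows backward, so the gap cannot close; this is exactly the non-merging claim quoted before \Cref{l:geometry-clean}. Other components of \(S_r\) lie beyond \(b_j(r)\) or \(a_{j+1}(r)\), so \(\dist(q_r,S_r)=\min(g^-,g^+)\). Backward in time components may only split (since \(M(s)\) is non-increasing); new gaps open inside existing components and only remove support, so they can only increase the distance.

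\emph{Main obstacle.} The delicate point is the applicability of \Cref{c:edge} at the two bounding edges for all backward times. Other edges elsewhere may merge or appear, which breaks the constancy of \(M\) on \([0,t]\), and an edge may be non-regular at isolated times (a cusp, by \Cref{r:edge-merging}). I would handle this by partitioning \([0,t]\) into finitely many intervals on which \(M\) is constant; across the breakpoints \(b_j(\cdot)\) and \(a_{j+1}(\cdot)\) are continuous in time. Merges happen away from our gap, which stays open by the bootstrap. The differential inequality is then integrated piecewise. If derivatives fail at a breakpoint, I would instead use the integral form \(b_j(r)=F_r(\beta_j(r))\) together with monotonicity of \(m_0\) on real gaps to obtain the same comparison without differentiating.
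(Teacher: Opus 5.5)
Your proposal is correct and follows essentially the same route as the paper. Both arguments compare the backward drift $-\Re m_r(z_r)$ of the characteristic with the edge velocities $-m_r(b_j(r))$ and $-m_r(a_{j+1}(r))$, use the monotonicity of $m_r$ across the real gap together with an $O(\eta_r^2/d_r^3)$ correction, and close with a continuity argument that keeps the distance above $\eps/2$.

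The differences are cosmetic. You integrate a linear lower bound for $q_r-b_j(r)$ and $a_{j+1}(r)-q_r$ separately, whereas the paper uses the Dini-derivative inequality $D^+d_r\le-\frac{c_0}{2}d_r$ for the minimum and gets exponential growth. Your explicit inclusion of edge regularity and gap persistence in the bootstrap is, if anything, cleaner than the paper's treatment of that point.
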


\begin{proof}
Fix \(x\in\mathcal F_t\) and \(u\in D_0^{(t,x)}\), and use the notation
in \Cref{l:geometry-clean}. By \eqref{eq:geometry-1},
\begin{equation}\label{eq:terminal-reference-distance}
  d_t(u)\ge\eps-\frac{\eta_*}{2} \ge 3\eps/4.
\end{equation}
We first derive a differential inequality for \(d_r(u)\) when \(d_r(u)>0\). Suppose that \(q_r\) lies in an interior
gap
\(
  \bigl(b_j(r),a_{j+1}(r)\bigr),
\)
and write
\[
  \kappa_r:=q_r-b_j(r),
  \qquad
  \ell_r:=a_{j+1}(r)-q_r,
  \qquad
  d_r=\min\{\kappa_r,\ell_r\}.
\]
By \Cref{c:edge} and \eqref{e:etas},
\[
  q_r'=-\Re m_r(z_r),
  \qquad
  b_j'(r)=-m_r(b_j(r)),
  \qquad
  a_{j+1}'(r)=-m_r(a_{j+1}(r)).
\]
If \(q_r\) lies in the right (w.r.t. left) exterior component of
\(\mathbb R\setminus S_r\), then
\[
  d_r=q_r-b_p(r)\quad (w.r.t. d_r=a_1(r)-q_r),
\]
and the computation for \(d_r\) applies similarly. For simplicity, we only treat the interior gap case.

By \eqref{eq:geometry-2} and the uniform support bound above, take  is
\(L=\max\{K,C\}>0\), we have
\[
  S_r\subset[-L,L],
  \qquad
  |q_r|\le L.
\]
Hence, for every real \(v\notin S_r\) lying between \(q_r\) and a nearest
endpoint of \(S_r\),
\begin{equation}
  m_r'(v)
  =
  \int_{\mathbb R}\frac{\mu_r(dy)}{(y-v)^2}
  \ge \frac{1}{4L^2}
  =:c_0.
  \label{eq:uniform-real-derivative-lower-bound}
\end{equation}
Moreover, since
\(
  |y-q_r|\ge d_r,
  \forall y\in S_r,
\)
we have
\begin{align}
  \left|m_r(q_r)-\Re m_r(z_r)\right|
  &=
  \left|
  \int_{\mathbb R}
  \frac{\eta_r^2}
  {(y-q_r)((y-q_r)^2+\eta_r^2)}\,\mu_r(dy)
  \right| \le \frac{\eta_r^2}{d_r^3}.
  \label{eq:real-complex-stieltjes-error-distance}
\end{align}
Therefore, when \(d_r=\kappa_r\), then
\begin{align*}
  \partial_r\kappa_r
  &=
  m_r(b_j(r))-\Re m_r(z_r)\\
  &=
  -\int_{b_j(r)}^{q_r}m_r'(v)\,dv
  +m_r(q_r)-\Re m_r(z_r)\\
  &\le
  -c_0d_r+\frac{\eta_r^2}{d_r^3}.
\end{align*}
Similarly, if \(d_r=\ell_r\),
\begin{align*}
  \partial_r\ell_r
  &=
  \Re m_r(z_r)-m_r(a_{j+1}(r))\\
  &=
  -\int_{q_r}^{a_{j+1}(r)}m_r'(v)\,dv
  +\Re m_r(z_r)-m_r(q_r)\\
  &\le
  -c_0d_r+\frac{\eta_r^2}{d_r^3}.
\end{align*}
Thus
\begin{equation}
  D^+d_r
  \le
  -c_0d_r+\frac{\eta_r^2}{d_r^3}, \qquad \text{with } D^+d_r
  :=
  \limsup_{h\downarrow0}
  \frac{d_{r+h}-d_r}{h},
  \label{eq:multicut-distance-differential-inequality}
\end{equation}
Let
\[
  r_*:=
  \inf\left\{
  s\in[0,t]:
  d_r\ge\frac{\eps}{2}
  \text{ for every }r\in[s,t]
  \right\}.
\]
By the continuity of \(q_r\) and edges \(S_r\),
\(r\mapsto d_r\) is continuous. Hence
\eqref{eq:terminal-reference-distance} shows that the set above is
nonempty. On \([r_*,t]\), \Cref{l:geometry-clean} gives
\(\eta_r\le C\eta_*\), and therefore, for all sufficiently large \(n\),
\[
  \frac{\eta_r^2}{d_r^3}
  \le
  \frac{8C^2\eta_*^2}{\eps^3}
  \le
  \frac{c_0}{2}d_r.
\]
Thus \eqref{eq:multicut-distance-differential-inequality} yields
\begin{equation}
  D^+d_r
  \le
  -\frac{c_0}{2}d_r
  \qquad\text{on }[r_*,t].
  \label{eq:reference-distance-dissipation}
\end{equation}
Consequently, it gives
\begin{equation}
  d_r
  \ge
  \exp\left(\frac{c_0}{2}(t-r)\right)d_t,
  \qquad
  r_*\le r\le t.
  \label{eq:reference-distance-exponential-growth}
\end{equation}
If \(r_*>0\), then \eqref{eq:reference-distance-exponential-growth}
and \eqref{eq:terminal-reference-distance} give
\(
  d_{r_*}\ge d_t\ge\frac{3\eps}{4}.
\)
By continuity, this is a contradiction. Hence
\(r_*=0\). 
In the end, we have
\begin{align*}
  d_r
  \ge
  \exp\left(\frac{c_0}{2}(t-r)\right)d_t
  \ge
  \exp\left(\frac{c_0}{2}(t-r)\right)
  \left(\eps-\frac{\eta_*}{2}\right)\ge
  \eps-\frac{\eta_*}{2}
  +c\eps(t-r)
\end{align*}
for a constant \(c=c(\eps,\sfT,K_0)>0\).
This proves
\eqref{eq:multicut-real-margin}.
\end{proof}

\subsubsection*{Step 2. Deterministic confinement before the stopping time}

The next lemma prevents any particle from crossing the forbidden set
\(
  \mathcal F_t
  =
  \{x\in\mathbb R:\dist(x,S_t)=\eps\}
\)
before the stopping time.

\begin{lemma}
\label{l:beforestop-clean}
For all \(0\le t\le\sigma\), and all sufficiently large \(n\),
\begin{equation}
  \Lambda_n(t)\subset S_t^{[\eps]},\quad  \Lambda_n(t)
  \cap
  \mathcal F_t^{[\eta_*]}=\varnothing.
  \label{eq:confinement-before-stopping}
\end{equation}
\end{lemma}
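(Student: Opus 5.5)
The plan is to combine the deterministic exclusion principle \Cref{c:deterministic_comparison} with a continuity (no-crossing) argument in time. Fix \(s\le\sigma\) and \(x\in\mathcal F_s\), and apply \Cref{c:deterministic_comparison} to the particle configuration at time \(s\), with reference measure \(\mu_s\), point \(u=x\), scale \(\eta=\eta_*\) and separation \(\eps\). Taking \(t=s\) in \eqref{eq:defDt-clean} gives \(D_s^{(s,x)}=B(x+\ri\eta_*,\eta_*/2)\), since \(z_s\circ z_s^{-1}\) is the identity. For \(s<\sigma\), the definition \eqref{eq:stopping-clean} yields
\[
|\widetilde m_s(w)-m_s(w)|<\frac{\delta_n}{n\Im w}\le \frac{2\delta_n}{n\eta_*},\qquad w\in B(x+\ri\eta_*,\eta_*/2).
\]
At \(s=\sigma\), the non-strict inequality follows by continuity of \(s\mapsto\widetilde m_s(w)-m_s(w)\), which is uniform on the compact disc because particles move continuously and \(m_s\) is continuous in \(s\) away from \(S_s\). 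The hypotheses of \Cref{c:deterministic_comparison} then hold: \(\eta_*=n^{-\fc}\le n^{-\fc}\eps\) up to adjusting \(\fc\) (or by taking \(\eta_*\le n^{-\fc/2}\eps\)), \(2\delta_n\to0\), \(\dist(x,S_s)=\eps\), and all particles remain bounded with overwhelming probability (alternatively, \(K\) plays no role in that proof). This gives \(\Lambda_n(s)\cap[x-\eta_*,x+\eta_*]=\varnothing\) for every \(x\in\mathcal F_s\), which is the second part of \eqref{eq:confinement-before-stopping}. The constants are uniform in \(s,x\): the proof of the theorem only uses \(k\) depending on \(\fc\) and the bound \eqref{eq:det-local-contradiction}, which is uniform.

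For the first inclusion, I will argue by continuity. At \(s=0\), \(\Lambda_n(0)=S_0\subset S_0^{[\eps]}\). Suppose some particle \(\lambda_i\) leaves \(S_s^{[\eps]}\) at a first time \(s_0\le\sigma\). The function \(g(s):=\dist(\lambda_i(s),S_s)-\eps\) is continuous, because \(\lambda_i\) is continuous and \(S_s\) varies continuously in Hausdorff distance. The continuity of \(S_s\) holds since the edges move continuously by \Cref{c:edge}, and merging only removes gaps continuously, per \Cref{r:edge-merging}. Since \(g(0)\le0\) and \(g\) becomes positive, there is a time \(s_1\le\sigma\) with \(g(s_1)=0\), that is, \(\lambda_i(s_1)\in\mathcal F_{s_1}\). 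This contradicts the second part. Therefore \(\Lambda_n(s)\subset S_s^{[\eps]}\) for all \(s\le\sigma\).

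The main obstacle is the continuity of \(s\mapsto S_s\), in particular the absence of jumps in \(\mathcal F_s\) when gaps appear or close. Gaps of \(S_s\) can only shrink forward in time, because \(M(s)\) is non-increasing, and a newly opened gap cannot arise. Hence \(\dist(\cdot,S_s)\) is continuous in \(s\); I would justify this via \(S_s=\spec(\mathsf a+\sqrt s\,\mathsf s)\) and the Hausdorff bound \(2|\sqrt s-\sqrt r|\). A secondary point is the endpoint \(s=\sigma\), where the strict inequality may fail. Replacing \(\delta_n\) by \(2\delta_n\) handles this.
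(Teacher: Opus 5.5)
Your proposal is correct and follows the paper's proof essentially step for step: you apply \Cref{c:deterministic_comparison} at each time $s\le\sigma$ on the discs $D_s^{(s,x)}=B(x+\ri\eta_*,\eta_*/2)$, handling the endpoint $s=\sigma$ by replacing $\delta_n$ with $2\delta_n$, and then run an intermediate-value argument on $s\mapsto\dist(\lambda_i(s),S_s)$, which cannot reach $\eps$ unless a particle lands in $\mathcal F_s$. Your justification of the continuity of $s\mapsto S_s$ via $d_H(S_s,S_r)\le 2|\sqrt s-\sqrt r|$ is sturdier than the paper's appeal to moving and merging edges, in particular near $s=0$, where $S_0$ is a finite set; at $s=\sigma$ the limit is cleanest along $z_s(z_\sigma^{-1}(w))\in D_s^{(\sigma,x)}$, i.e.\ using $t=\sigma$ in \eqref{eq:stopping-clean}, because a point $x\in\mathcal F_\sigma$ need not belong to $\mathcal F_s$ for $s<\sigma$.
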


\begin{proof}
We first exclude particles from a neighborhood of the forbidden set $\mathcal F_t$.
Fix \(0<s\le\sigma\). By the definition of \(\sigma\), and by
continuity at the endpoint \(s=\sigma\),
\[
  \sup_{x\in\mathcal F_s}
  \sup_{w\in B(x+\ri\eta_*,\eta_*/2)}
  \left|\widetilde m_s(w)-m_s(w)\right|
  \le
  \frac{2\delta_n}{n\eta_*}.
\]
Fix \(x\in\mathcal F_s\). We apply
\Cref{c:deterministic_comparison} with
\[
  u=x,
  \qquad
  \eta=\eta_*,
  \qquad
  \mu=\mu_s.
\]
Since
\(
\dist\bigl(u,\supp(\mu_s)\bigr)=\eps,
\)
and \(\eta_*=n^{-\mathfrak c}\le n^{-\fc/2}\eps\) for \(n\) large enough, the assumptions of
\Cref{c:deterministic_comparison} are satisfied (up to replacing \(\delta_n\) there by \(2\delta_n\)).
Hence
\[
\lambda_i(s)\notin [x-\eta_*,\,x+\eta_*],\qquad 1\le i\le n.
\]
Varying $x\in\mathcal F_s$ gives
\begin{equation}
  \Lambda_n(s)
  \cap
  \mathcal F_s^{[\eta_*]}
  =
  \varnothing,
  \qquad
  0<s\le\sigma.
  \label{eq:no-particle-in-forbidden-set}
\end{equation}

For each \(1\le i\le n\) and for $0\le s\le\sigma$, define
\(
  \rho_i(s)
  :=
  \dist\bigl(\lambda_i(s),S_s\bigr).
\)
The particle trajectories \(s\mapsto\lambda_i(s)\) are continuous. Moreover,
\(s\mapsto S_s\) is continuous in the sense that the endpoints may merge or move continuously, see \Cref{r:edge-merging}. Hence
\(s\mapsto\rho_i(s)\) is continuous.
At the initial time,
\(
  \Lambda_n(0)\subset S_0,
\)
and hence
\(  \rho_i(0)=0.\)
Suppose, for a contradiction, that there exist \(t_0\le\sigma\) and
\(1\le i\le n\) such that
\(
  \rho_i(t_0)=\eps.
\)
Then
\[
  \dist(\lambda_i(t_0),S_{t_0})=\eps.
\]
On the other hand,
\[
  \lambda_i(t_0)\in\Lambda_n(t_0),
\]
which contradicts
\eqref{eq:no-particle-in-forbidden-set}, since
\(
  \mathcal F_{t_0}
  \subset
  \mathcal F_{t_0}^{[\eta_*]}.
\)
Therefore
\[
  \rho_i(t)<\eps,
  \qquad
  1\le i\le n,
  \qquad
  0\le t\le\sigma.
\]
This proves \eqref{eq:confinement-before-stopping}.
\end{proof}

\subsubsection*{Step 3. Lattice approximation}

We discretize the initial spectral domains by a lattice in \(\bC\). Set
\[
\mathcal L
:=
\left(
\bigcup_{0\le t\le T}\ \bigcup_{x\in\mathcal F_t}
  D_0^{(t,x)}
\right)\cap n^{-8}(\bZ+\ri \bZ).
\]

\begin{lemma}[Lattice approximation]\label{l:approx-clean}
There exists a constant \(C=C(\eps,\sfT)\) such that for all
\(0\le s\le t\le \sfT\), provided \(n\) is sufficiently large, for every \(w\in \mathcal D_s^{(t,x)}\), there exists
\(
u\in \mathcal L\cap \mathcal D_0^{(t,x)} 
\)
such that
\(
|z_s(u)-w|\le C n^{2\mathfrak c-8}.
\)
\end{lemma}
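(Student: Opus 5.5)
Given \(w\in D_s^{(t,x)}\), write \(w=z_s(u_0)\) with \(u_0\in D_0^{(t,x)}\); such a \(u_0\) exists by the definition \eqref{eq:defDt-clean}, since \(z_s\circ z_t^{-1}\) is realized by first pulling back to time \(0\) through \(z_t^{-1}\). The plan is to pick a lattice point \(u\in n^{-8}(\bZ+\ri\bZ)\) with \(|u-u_0|\le n^{-8}\), check that it can be taken in \(D_0^{(t,x)}\), and control \(|z_s(u)-z_s(u_0)|\) by a Lipschitz bound for \(z_s=\mathrm{id}-s\,m_0\).

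\textbf{Step 1: Lipschitz bound for \(z_s\).} Since \(z_s(u)=u-s\,m_0(u)\), we have \(|z_s'(u)|\le 1+\sfT|m_0'(u)|\). By \Cref{l:STproperty}, \(|m_0'(u)|\le \Im m_0(u)/\Im u\). By \Cref{l:geometry-clean}, every point of \(D_0^{(t,x)}\) satisfies \(\Im u=\eta_0(u)\ge C^{-1}\eta_*\) and \(\Im m_0(u)=v(u)\le C\eta_*\). Hence \(|m_0'|\le C^2\) there.

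These two bounds only hold on \(D_0^{(t,x)}\) itself. To handle the segment from \(u_0\) to \(u\), I will use the cruder global bound \(|m_0'(u)|\le (\Im u)^{-2}\). On the segment \(\Im\ge C^{-1}\eta_*-n^{-8}\ge (2C)^{-1}\eta_*\), so \(|m_0'|\le 4C^2\eta_*^{-2}=4C^2n^{2\fc}\). Integrating along the segment gives
\[
|z_s(u)-z_s(u_0)|\le (1+4\sfT C^2n^{2\fc})\,n^{-8}\le C' n^{2\fc-8}.
\]
This is exactly the claimed rate, which suggests this crude bound is what produces the factor \(n^{2\fc}\).

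\textbf{Step 2: choosing \(u\) inside \(D_0^{(t,x)}\).} This is the main obstacle. A priori the nearest lattice point may fall outside \(D_0^{(t,x)}\), for example when \(u_0\) lies near its boundary. I would handle it by one of two routes.

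The first route shows that \(D_0^{(t,x)}=z_t^{-1}(B(x+\ri\eta_*,\eta_*/2))\) contains a disc of radius \(\gg n^{-8}\) around points near any of its boundary points. Since \(z_t^{-1}\) is holomorphic, the inverse function theorem applies: \(|z_t'|\le 1+\sfT C^2\) on \(D_0\), so \(|(z_t^{-1})'|\ge c>0\). Koebe-type distortion on the disc of radius \(\eta_*/2\) then shows that the image contains a disc of radius \(\gtrsim \eta_*\) about the preimage of any interior point. Concretely, I replace \(w\) by \(w'=z_s(u_0')\), where \(u_0'=z_t^{-1}(\zeta')\) and \(\zeta'\) is obtained by moving \(z_t(u_0)\) toward the center \(x+\ri\eta_*\) by distance \(n^{-8}\). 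Then \(u_0'\) has an \(n^{-8}\)-neighborhood contained in \(D_0^{(t,x)}\), and \(|u_0'-u_0|\lesssim n^{-8}\) by the Lipschitz bound for \(z_t^{-1}\), which needs \(|z_t'|\) bounded below. A lower bound follows from \(|z_t'-1|=t|m_0'|\) together with \(t\,\Im m_0/\Im u<1\) on \(\Lambda_t\), but this is not uniform near \(\partial\Lambda_t\).

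So I expect the second route to be cleaner. The lemma presumably only needs \(u\) within the \(n^{-8}\)-neighborhood, where all the estimates of Step 1 still hold. I would therefore check whether the later use tolerates \(u\in\mathcal L\) being close to \(D_0^{(t,x')}\) for a nearby \(x'\in\mathcal F_t\). Since \(\mathcal L\) is a union over all \(t\) and \(x\), a slightly perturbed \(x'\) (or \(t'\)) covers \(u\). Continuity of \(\mathcal F_t\) in \(x\) along the real direction makes this direct for horizontal displacement; vertical displacement requires the route-one argument.

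Combining Step 1 with either choice from Step 2 yields \(|z_s(u)-w|\le Cn^{2\fc-8}\), with \(C=C(\eps,\sfT)\).
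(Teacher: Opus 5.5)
\textbf{Step~1 matches the paper; Step~2 is left unresolved.} Your Step~1 is the paper's argument. The paper also pulls $w$ back to $\zeta\in D_0^{(t,x)}$, picks a lattice point within $2n^{-8}$, and uses the crude bound $|m_0'|\le(\Im u)^{-2}\le Cn^{2\fc}$ to make $z_s$ Lipschitz with constant $Cn^{2\fc}$. Your integration along a segment needs only a lower bound on $\Im$, so it is more careful than the paper's Lipschitz claim on the (not necessarily convex) set $\bigcup_t D_0^{(t,x)}$.

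The paper then simply asserts that the lattice point can be chosen inside $D_0^{(t,x)}$. You are right that this needs an argument, but your proposal does not supply one. Route~2 does not prove the stated lemma. Its conclusion requires $u\in\mathcal L\cap D_0^{(t,x)}$ for the given $(t,x)$, and $\mathcal F_t$ is a finite set of points, so at fixed $t$ there is no nearby $x'\in\mathcal F_t$ to move to. Passing to some $D_0^{(t',x')}$ would also need $t'\ge t\wedge\sigma$ in \Cref{p:stoptime-clean}, because \eqref{eq:lattice-bound-clean} is only proved along $z_s(u)$ for $s\le t'$.

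\textbf{Route~1 works; the obstruction you cite is not real.} The lower bound on $|z_t'|$ is governed by the ratio $\eta_t(u)/\eta_0(u)$, not by the distance to $\partial\Lambda_t$. For $u\in D_0^{(t,x)}$, \Cref{l:geometry-clean} gives $\eta_t(u)\ge\eta_*/2$ and $\eta_0(u)=\Im u\le C\eta_*$. Using $|m_0'(u)|\le \Im m_0(u)/\Im u$, this gives
\[
|z_t'(u)|\ge 1-t\,\frac{\Im m_0(u)}{\Im u}=\frac{\eta_t(u)}{\eta_0(u)}\ge\frac{1}{2C},
\qquad
|z_t'(u)|\le 1+\sfT C^2 ,
\]
uniformly on $D_0^{(t,x)}$. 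Hence $z_t^{-1}$ is $2C$-Lipschitz on the convex disc $B(x+\ri\eta_*,\eta_*/2)$. The argument then closes as follows.
\begin{enumerate}
\item Set $\rho:=4(1+\sfT C^2)n^{-8}$. Move $\xi=z_t(\zeta)$ radially toward the centre by at most $\rho$ to a point $\xi'$ with $B(\xi',\rho)$ inside that disc, and put $\zeta'=z_t^{-1}(\xi')\in D_0^{(t,x)}$.
\item Apply Koebe's one-quarter theorem to the univalent map $z_t^{-1}$ on $B(\xi',\rho)$. This gives $D_0^{(t,x)}\supset z_t^{-1}(B(\xi',\rho))\supset B(\zeta',n^{-8})$.
\item The disc $B(\zeta',n^{-8})$ contains a point $u\in n^{-8}(\bZ+\ri\bZ)$, and $|u-\zeta|\le n^{-8}+2C\rho=O(n^{-8})$.
\item Your Step~1 then yields $|z_s(u)-w|\le Cn^{2\fc-8}$.
\end{enumerate}
With this observation added, your proof is complete, and it fills a step the paper leaves unjustified.
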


\begin{proof}
Let \(\zeta:=z_s^{-1}(w)\). Then \(\zeta\in \mathcal D_0^{(t,x)}\), hence there exists
\(u\in \mathcal L\cap \mathcal D_0^{(t,x)}\) such that \(|u-\zeta|\le 2n^{-8}\).

Since from \Cref{l:geometry-clean}, \(\Im u\asymp \eta_*\) on \(\bigcup_t \mathcal D_0^{(t,x)}\), we have
\[
|m_0'(u)|\le \frac{C}{(\Im u)^2}\le C n^{2\mathfrak c}.
\]
Therefore
\[
|\partial_u z_s(u)| = |1-s m_0'(u)| \le 1+\sfT |m_0'(u)|\le C n^{2\mathfrak c}.
\]
Thus \(z_s\) is Lipschitz with constant \(C n^{2\mathfrak c}\) on
\(\bigcup_t \mathcal D_0^{(t,x)}\), and hence
\[
|z_s(u)-w|
=
|z_s(u)-z_s(\zeta)|
\le C n^{2\mathfrak c}|u-\zeta|
\le C n^{2\mathfrak c-8}.
\]
\end{proof}

\subsubsection*{Step 4. Bounds on the error terms along characteristics}

Recall that
\[
\widetilde m_s(z)=\frac1n\sum_{i=1}^n\frac{1}{\lambda_i(s)-z},
\qquad z\in \bH.
\]
By It\^{o}'s formula, along a characteristic \(z_s(u)\) we have
\begin{equation}\label{eq:dmt-along-char-clean}
d\widetilde m_s(z_s(u))
=
\partial_z \widetilde m_s(z_s(u))
\bigl(\widetilde m_s(z_s(u))-m_s(z_s(u))\bigr)\,ds
+dM_s(u)+dR_s(u),
\end{equation}
where
\begin{align}
dM_s(u)
&=
-\sqrt{\frac{2}{\beta n^3}}
\sum_{i=1}^n
\frac{dB_i(s)}{(\lambda_i(s)-z_s(u))^2},\label{eq:defM-clean}\\
dR_s(u)
&=
\frac{2-\beta}{\beta n^2}
\sum_{i=1}^n
\frac{ds}{(\lambda_i(s)-z_s(u))^3}.
\label{eq:defR-clean}
\end{align}

We first prove a lower bound on the distance from the characteristic to the particles.

\begin{lemma}\label{l:distance-clean}
There exists \(c=c(\eps,\sfT)>0\) such that the following holds for all
\(0\le r\le t\le \sfT\), all \(u\in \mathcal D_0^{(t,x)}\), and all \(1\le i\le n\),
on the event \(\{r\le \sigma\}\):
\begin{equation}\label{eq:distance-clean}
|\lambda_i(r)-z_r(u)|^2
\ge
c\bigl(\varepsilon^2(t-r)^2+\eta_*^2\bigr).
\end{equation}
\end{lemma}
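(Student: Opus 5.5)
We split $|\lambda_i(r)-z_r(u)|^2=(\lambda_i(r)-q_r(u))^2+\eta_r(u)^2$ into its real and imaginary parts. For the imaginary part, \Cref{l:geometry-clean} already gives $\eta_r(u)\ge C^{-1}\eta_*$, so $\eta_r(u)^2\ge C^{-2}\eta_*^2$. The lemma therefore reduces to the real-part bound
\[
|\lambda_i(r)-q_r(u)|\ge c'\varepsilon(t-r)\qquad\text{on }\{r\le\sigma\},
\]
or any alternative giving $|\lambda_i(r)-q_r(u)|\gtrsim \varepsilon(t-r)$ up to the $\eta_*$ term. Summing the two squares then yields \eqref{eq:distance-clean}, with $c$ the minimum of $c'^2$ and $C^{-2}$.

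For the real part, we combine \Cref{lem:multicut-reference-geometry} with \Cref{l:beforestop-clean}. Since $r\le\sigma$, \Cref{l:beforestop-clean} gives $\lambda_i(r)\in S_r^{[\varepsilon]}$, so $\dist(\lambda_i(r),S_r)\le\varepsilon$. On the other hand, \eqref{eq:multicut-real-margin} gives
\[
\dist(q_r(u),S_r)\ge\varepsilon-\eta_*/2+c\varepsilon(t-r).
\]
Because $y\mapsto\dist(y,S_r)$ is $1$-Lipschitz, we get
\[
|\lambda_i(r)-q_r(u)|\ge \dist(q_r(u),S_r)-\dist(\lambda_i(r),S_r)\ge c\varepsilon(t-r)-\eta_*/2 .
\]

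This bound is useful only when $c\varepsilon(t-r)\ge\eta_*$. In that regime $c\varepsilon(t-r)-\eta_*/2\ge \tfrac12 c\varepsilon(t-r)$, and the real-part bound holds with $c'=c/2$. In the complementary regime $c\varepsilon(t-r)<\eta_*$, we have $\varepsilon^2(t-r)^2\le c^{-2}\eta_*^2$. The right-hand side of \eqref{eq:distance-clean} is then of order $\eta_*^2$, and the imaginary part $\eta_r(u)^2\ge C^{-2}\eta_*^2$ alone suffices after shrinking $c$. Combining the two regimes gives $|\lambda_i(r)-z_r(u)|^2\ge c\bigl(\varepsilon^2(t-r)^2+\eta_*^2\bigr)$ uniformly in $i$, $u$, $r\le t$, and $x\in\mathcal F_t$.

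The main subtlety is making sure the inputs hold exactly on $\{r\le\sigma\}$, including the endpoint $r=\sigma$ and small $r$. \Cref{l:beforestop-clean} is stated for all $0\le t\le\sigma$. At $r=0$ we have $\Lambda_n(0)\subset S_0$ directly, so $\dist(\lambda_i(0),S_0)=0\le\varepsilon$ and the argument goes through unchanged. We also need \eqref{eq:multicut-real-margin} for every $u\in D_0^{(t,x)}$, which is exactly its hypothesis. The constants $c$ and $C$ depend only on $(\varepsilon,\sfT,K_0)$, so the resulting $c$ in \eqref{eq:distance-clean} has the same dependence.
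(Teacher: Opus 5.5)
Your proof is correct. It differs from the paper's in one respect: where the $\eta_*^2$ term comes from.

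The paper uses the sharper half of \Cref{l:beforestop-clean}, namely $\Lambda_n(r)\cap\mathcal F_r^{[\eta_*]}=\varnothing$. It combines this with the claim that the gap containing $\Re z_r(u)$ has width at least $2\varepsilon$, and so places every relevant particle within distance $\varepsilon-\eta_*$ of the neighbouring support components. Subtracting this from \eqref{eq:multicut-real-margin} gives $|\Re z_r(u)-\lambda_i(r)|\ge c\varepsilon(t-r)+\eta_*/2$. Hence \eqref{eq:distance-clean} follows from the real part alone, with no case split and no use of $\eta_r(u)$.

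You instead use only the coarse confinement $\Lambda_n(r)\subset S_r^{[\varepsilon]}$ and the $1$-Lipschitz property of $\dist(\cdot,S_r)$. You accept the loss of $\eta_*/2$ in the real part and recover the $\eta_*^2$ term from the lower bound $\eta_r(u)\ge C^{-1}\eta_*$ of \Cref{l:geometry-clean}, via your two-regime split.

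Your route is slightly more economical in its inputs. It needs no information about the width of the gap containing $\Re z_r(u)$ at time $r$. The paper's claim $a_{j+1}(r)-b_j(r)\ge 2\varepsilon$ requires a separate argument, such as the forward-in-time shrinking of gaps implied by \Cref{c:edge}. Your route also needs no separate bookkeeping for particles lying beyond the two components adjacent to $\Re z_r(u)$. The paper's route gives a marginally stronger real-part separation, of order $\eta_*$ even at $r=t$. That extra information is not needed for the lemma or for its use in \Cref{p:error-bounds-clean}.
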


\begin{proof}
Suppose $\Re z_r(u)$ lies in the interior gap $(b_j(r),a_{j+1}(r))$.
By \Cref{l:beforestop-clean} and notice $a_{j+1}(r)-b_j(r)\geq 2\varepsilon$,
\[
\operatorname{dist}(\lambda_i(r),[a_{j}(r),b_j(r)] \cup[a_{j+1}(r),b_{j+1}(r)] )<\varepsilon-\eta_*.
\]
On the other hand, \Cref{lem:multicut-reference-geometry} gives
\[
  \operatorname{dist}(\operatorname{Re}z_r(u),S_r)
  \ge\varepsilon-\frac{\eta_*}{2}
       +c_0\varepsilon(t-r).
\]
Therefore,
\[
  |\operatorname{Re}z_r(u)-\lambda_i(r)|
  \ge c_0\varepsilon(t-r)+\frac{\eta_*}{2},
\]
which gives
\eqref{eq:distance-clean}.
\end{proof}

We next bound the martingale and drift terms.

\begin{proposition}[Bounds on the error terms]\label{p:error-bounds-clean}
There exists an event \(\Omega\), measurable with respect to the Brownian paths
\(\{B_i(s)\}_{1\le i\le n,\ 0\le s\le \sfT}\), such that
\[
\mathbb P(\Omega)\ge 1-Ce^{-(\log n)^2},
\]
and the following holds on \(\Omega\).
For every \(t\in[0,\sfT]\), every \(x \in \cF_t\), every \(u\in \mathcal L\cap \mathcal D_0^{(t,x)}\), and every \(0\le s\le t\),
\begin{align}
|M_{s\wedge \sigma}(u)|
&\le \frac{C(\log n)^2}{n\sqrt{\eps\,\eta_*}},
\label{eq:martingale-bound-clean}\\
|R_{s\wedge \sigma}(u)|
&\le \frac{C\log n}{n\eps}.
\label{eq:drift-bound-clean}
\end{align}
\end{proposition}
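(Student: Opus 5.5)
The plan is to bound both terms pointwise for each fixed lattice point $u\in\mathcal L$ and then take a union bound over $\mathcal L$. Since $\mathcal L\subset n^{-8}(\bZ+\ri\bZ)$ and every $D_0^{(t,x)}$ lies in a bounded region by \Cref{l:geometry-clean}, we have $|\mathcal L|\le Cn^{16}$. A polynomial factor is absorbed into $e^{-(\log n)^2}$ once the tail bound for each fixed $u$ is of the form $e^{-c(\log n)^{2}\cdot(\log n)^{\theta}}$, or more simply of the form $e^{-2(\log n)^2}$. The stopping at $\sigma$ matters because it makes \Cref{l:distance-clean} available for all integrands up to time $s\wedge\sigma$. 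Thus on $\{r\le\sigma\}$ we have $|\lambda_i(r)-z_r(u)|^2\ge c(\eps^2(t-r)^2+\eta_*^2)$, uniformly in $i$.

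\textbf{Drift term.} I would bound $R$ deterministically on $\{r\le\sigma\}$:
\[
|R_{s\wedge\sigma}(u)|\le \frac{|2-\beta|}{\beta n}\int_0^{s\wedge\sigma}\frac1n\sum_i\frac{\rd r}{|\lambda_i(r)-z_r(u)|^3}
\le \frac{C}{n}\int_0^t\frac{\rd r}{(\eps^2(t-r)^2+\eta_*^2)^{3/2}}.
\]
This integral is of order $\eps^{-1}\eta_*^{-2}$, which is too large. So I would not use the cube directly. Instead I would write $|\lambda_i-z|^{-3}\le |\lambda_i-z|^{-1}\,(\Im z_r)^{-1}\,|\lambda_i-z|^{-1}$ and use the averaged quantity $\frac1n\sum_i|\lambda_i-z|^{-2}=\Im\widetilde m_r(z_r)/\eta_r$. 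Before $\sigma$, $\Im \widetilde m_r(z_r)=\Im m_r(z_r)+O(\delta_n/(n\eta_r))=v(u)+o(1)$, and this is $\asymp\eta_*$ by \Cref{l:geometry-clean}. Hence $\frac1n\sum_i|\lambda_i-z_r|^{-2}\le C$. Combining this with one factor of $|\lambda_i-z_r|^{-1}\le C(\eps(t-r)+\eta_*)^{-1}$ gives
\[
|R|\le \frac Cn\int_0^t\frac{\rd r}{\eps(t-r)+\eta_*}\le \frac{C\log n}{n\eps},
\]
which is \eqref{eq:drift-bound-clean}. This bound holds deterministically, so no probability is needed for this term.

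\textbf{Martingale term.} The quadratic variation satisfies
\[
\langle M\rangle_{s\wedge\sigma}=\frac{2}{\beta n^3}\int_0^{s\wedge\sigma}\sum_i\frac{\rd r}{|\lambda_i-z_r|^4}
\le \frac{C}{n^2}\int_0^t \frac{1}{(\eps^2(t-r)^2+\eta_*^2)}\cdot\frac1n\sum_i\frac{1}{|\lambda_i-z_r|^2}\,\rd r .
\]
Using again $\frac1n\sum_i|\lambda_i-z_r|^{-2}\le C$ before $\sigma$, this gives $\langle M\rangle\le C n^{-2}\int_0^t(\eps(t-r)+\eta_*)^{-2}\rd r\le C/(n^2\eps\eta_*)$. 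This is an almost sure bound for the stopped process. The exponential martingale (Bernstein/Freedman) inequality for continuous martingales with bounded quadratic variation, applied separately to the real and imaginary parts, then gives $\mathbb P(\sup_{s\le t}|M_{s\wedge\sigma}|\ge \lambda\sqrt{C/(n^2\eps\eta_*)})\le 4e^{-\lambda^2/2}$. Taking $\lambda=(\log n)^2/\sqrt{C}$ up to constants yields the tail bound $e^{-c(\log n)^4}$, which easily survives the union bound. Using the running supremum handles all $s\le t$ simultaneously. For fixed $u$, the characteristic $z_s(u)$ and $t$ are tied through $x$. To avoid uncountably many $t$, I would bound $\sup_{s\le T}$ for the martingale indexed by $u$ alone: the integrand depends only on $u$, and the bounds hold for every admissible $t$ because the domain constraint only enters through the distance lemma. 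I would then take a union over the $Cn^{16}$ lattice points. The same procedure is applied to BDG-type estimates if one prefers moments, but the exponential inequality is cleanest.

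\textbf{Main obstacle.} I expect the main difficulty to be justifying $\frac1n\sum_i|\lambda_i(r)-z_r(u)|^{-2}\le C$ before $\sigma$. This requires $z_r(u)$ to lie in some domain $D_r^{(t,x)}$ where the comparison defining $\sigma$ is in force, and it does by construction, since $u\in D_0^{(t,x)}$ means $z_r(u)\in D_r^{(t,x)}$. It also requires the identity $\Im\widetilde m=\eta\cdot\frac1n\sum|\lambda-z|^{-2}$ together with $\delta_n/(n\eta_r)\ll\eta_*$, which holds because $\eta_*=n^{-\fc}$ with $\fc<1/2$. A further subtlety is that the integrals must be cut at $s\wedge\sigma$ so that both the distance lemma and the comparison apply at every integration time. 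Left continuity at $\sigma$ is handled as in \Cref{l:beforestop-clean} by replacing $\delta_n$ with $2\delta_n$.
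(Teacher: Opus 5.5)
Your proposal is correct and follows the paper's proof. You use the same one-factor splits, $|\lambda_i-z_r|^{-3}\le|\lambda_i-z_r|^{-1}\cdot|\lambda_i-z_r|^{-2}$ and $|\lambda_i-z_r|^{-4}\le|\lambda_i-z_r|^{-2}\cdot|\lambda_i-z_r|^{-2}$. The distance lemma controls one factor, and $\frac1n\sum_i|\lambda_i-z_r|^{-2}=\Im\widetilde m_r(z_r)/\eta_r\le C$ before $\sigma$ controls the other; a union bound over $\mathcal L$ then finishes. The only real difference is that you apply the exponential (Freedman) inequality to the stopped martingale with bounded quadratic variation, where the paper uses BDG with $q=(\log n)^2$ plus Markov. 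Your route gives a stronger $e^{-c(\log n)^4}$ tail.

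Two small corrections. First, drop the stray $(\Im z_r)^{-1}$ in your intermediate drift inequality: taken literally it only yields $|R|\lesssim (n\eta_*)^{-1}$, not the stated bound. Second, for each $u$ the time horizon should be $\sup\{t: u\in D_0^{(t,x)}\text{ for some }x\in\mathcal F_t\}$ rather than $\sfT$, since the distance lemma gives no control beyond that time.
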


\begin{proof}
We begin with the martingale term. Its quadratic variation is
\[
\langle M(u)\rangle_{s\wedge \sigma}
=
\frac{2}{\beta n^3}
\sum_{i=1}^n
\int_0^{s\wedge \sigma}
\frac{dr}{|\lambda_i(r)-z_r(u)|^4}.
\]
Using \eqref{eq:distance-clean}, we obtain
\[
\frac{1}{|\lambda_i(r)-z_r(u)|^4}
\le
\frac{C}{\varepsilon^2(t-r)^2+\eta_*^2}\cdot
\frac{1}{|\lambda_i(r)-z_r(u)|^2}.
\]
Therefore
\begin{align*}
\langle M(u)\rangle_s
&\le
\frac{C}{n^2}
\int_0^{s\wedge \sigma}
\frac{1}{\varepsilon^2(t-r)^2+\eta_*^2}
\cdot
\frac{\Im \widetilde m_r(z_r(u))}{\eta_r(u)}\,dr.
\end{align*}
Before the stopping time, by definition of \(\sigma\),
\[
\Im \widetilde m_r(z_r(u))
\le
\Im m_r(z_r(u))+\frac{\delta_n}{n\eta_r(u)}
=
v(u)+\frac{\delta_n}{n\eta_r(u)}.
\]
By \Cref{l:geometry-clean}, \(v(u)\asymp \eta_*\) and \(\eta_r(u)\asymp \eta_*\), hence
\[
\frac{\Im \widetilde m_r(z_r(u))}{\eta_r(u)}\le C
\]
for \(n\) sufficiently large. Thus
\[
\langle M(u)\rangle_{s\wedge \sigma}
\le
\frac{C}{n^2}
\int_0^{s\wedge \sigma} \frac{dr}{\varepsilon^2(t-r)^2+\eta_*^2}
\le
\frac{C}{n^2}\int_0^t \frac{dr}{\eps^2(t-r)^2+\eta_*^2}
\le
\frac{C}{n^2 \eps\, \eta_*}.
\]

Fix \(q=(\log n)^2\). By the Burkholder--Davis--Gundy inequality,
\[
\mathbb E\left[\sup_{0\le s\le t}|M_{s\wedge \sigma}(u)|^{2q}\right]
\le
(Cq)^{2q}
\left(\frac{C}{n^2\eps\,\eta_*}\right)^q.
\]
By Markov's inequality, after enlarging \(C\) if necessary,
\[
\mathbb P\left(
\sup_{0\le s\le t}|M_{s\wedge \sigma}(u)|
>
\frac{C(\log n)^2}{n\sqrt{\eps\,\eta_*}}
\right)
\le e^{-2(\log n)^2}.
\]
Since \(
\bigcup_{0\le t\le T}\ \bigcup_{x\in\mathcal F_t}
  D_0^{(t,x)}
\) is contained in a bounded region of \(\bH\) and the
mesh size is \(n^{-8}\), we have \(|\mathcal L|\le n^{20}\) for \(n\) large enough. Taking a union bound over
\(u\in \mathcal L\), we obtain an event \(\Omega\) such that
\[
\mathbb P(\Omega)\ge 1-Ce^{-(\log n)^2},
\]
and \eqref{eq:martingale-bound-clean} holds for every admissible \(t,u,s\).

We now turn to the deterministic drift term. By \eqref{eq:defR-clean},
\[
|R_{s\wedge \sigma}(u)|
\le
\frac{C}{n^2}
\sum_{i=1}^n
\int_0^{s\wedge \sigma}\frac{dr}{|\lambda_i(r)-z_r(u)|^3}.
\]
Using \eqref{eq:distance-clean} once again, we get
\[
\frac{1}{|\lambda_i(r)-z_r(u)|^3}
\le
\frac{C}{\sqrt{\eps^2(t-r)^2+\eta_*^2}}\cdot
\frac{1}{|\lambda_i(r)-z_r(u)|^2}.
\]
Hence
\[
|R_{s\wedge \sigma}(u)|
\le
\frac{C}{n}
\int_0^{s\wedge \sigma}
\frac{1}{\sqrt{\eps^2(t-r)^2+\eta_*^2}}
\cdot
\frac{\Im \widetilde m_r(z_r(u))}{\eta_r(u)}\,dr
\le
\frac{C}{n}
\int_0^t
\frac{dr}{\sqrt{\eps^2(t-r)^2+\eta_*^2}}
\le
\frac{C\log n}{n\eps},
\]
where as above, we use
\[
\frac{\Im \widetilde m_r(z_r(u))}{\eta_r(u)}\le C.
\]
This proves \eqref{eq:drift-bound-clean}.
\end{proof}

\subsubsection*{Step 5. Proof that \(\sigma=\sfT\)}
We complete the bootstrap argument by showing that \(\sigma=\sfT\), which concludes the proof of \Cref{t:DBM_support}.

\begin{proposition}\label{p:stoptime-clean}
With probability at least \(1-Ce^{-(\log n)^2}\), we have \(\sigma=\sfT\).
\end{proposition}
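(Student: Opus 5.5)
We work on the event \(\Omega\) of \Cref{p:error-bounds-clean} and show that \(\sigma=\sfT\) holds there. The argument is a bootstrap: before \(\sigma\) the resolvent error along each characteristic is much smaller than the threshold \(\delta_n/(n\eta)\) defining \(\sigma\), so by continuity \(\sigma\) cannot occur before \(\sfT\). We choose \(\delta_n\to0\) slowly, for instance \(\delta_n=(\log n)^{-1}\). We also use the freedom to fix \(\mathfrak c\) small, so that the error bounds of \Cref{p:error-bounds-clean}, multiplied by \(n\eta_*\), become \(o(\delta_n)\).

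\textbf{Step 1 (Gr\"onwall along characteristics).} Fix \(t\), \(x\in\mathcal F_t\) and a lattice point \(u\in\mathcal L\cap D_0^{(t,x)}\), and set \(g_s:=\widetilde m_s(z_s(u))-m_s(z_s(u))\). Since \(m_s(z_s)\) is constant in \(s\) by \eqref{e:m0=ms}, equation \eqref{eq:dmt-along-char-clean} gives
\[
dg_s=\partial_z\widetilde m_s(z_s)\,g_s\,ds+dM_s(u)+dR_s(u),\qquad g_0=0,
\]
where \(g_0=0\) follows from \eqref{eq:exact-initial-stieltjes}. Solving this linear equation with the integrating factor \(\exp(\int_r^s\partial_z\widetilde m_\tau\,d\tau)\) leads to the estimate
\[
|g_{s\wedge\sigma}|\le \sup_{r\le s\wedge \sigma}|M_r+R_r|\cdot\Big(1+\int_0^{s\wedge\sigma}|\partial_z\widetilde m_r(z_r)|\,e^{\int_r^{s}|\partial_z\widetilde m_\tau|d\tau}dr\Big).
\]
In practice one integrates by parts so that only sup norms of \(M+R\) appear.

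The key input is the bound \(|\partial_z\widetilde m_r(z_r)|\le \frac1n\sum_i|\lambda_i-z_r|^{-2}=\Im\widetilde m_r/\eta_r\le C\). This holds for \(r\le\sigma\), exactly as in the proof of \Cref{p:error-bounds-clean}. Consequently the exponential factor is bounded by \(e^{C\sfT}\), and hence
\[
|g_{s\wedge\sigma}|\le C\Big(\frac{(\log n)^2}{n\sqrt{\eps\eta_*}}+\frac{\log n}{n\eps}\Big)\quad\text{on }\Omega.
\]
Multiplying by \(n\,\Im z_s\asymp n\eta_*\) (by \Cref{l:geometry-clean}) gives \(n\eta_*|g|\le C(\log n)^2\eta_*^{1/2}\varepsilon^{-1/2}=C(\log n)^2n^{-\mathfrak c/2}\). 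This is \(\ll\delta_n\).

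\textbf{Step 2 (from the lattice to all of \(D_s^{(t,x)}\)).} Take any \(w\in D_s^{(t,x)}\) with \(s\le\sigma\). By \Cref{l:approx-clean} there is a lattice point \(u\) with \(|z_s(u)-w|\le Cn^{2\mathfrak c-8}\). Both \(\widetilde m_s\) and \(m_s\) are Lipschitz with constant \(\le(\Im w/2)^{-2}\le Cn^{2\mathfrak c}\) on this tiny neighborhood, so the transfer costs only \(Cn^{4\mathfrak c-8}\ll\delta_n/(n\eta_*)\). Therefore, on \(\Omega\),
\[
\sup_{s\le\sigma}\ \sup_{t,x}\ \sup_{w\in D_s^{(t,x)}}|\widetilde m_s(w)-m_s(w)|\le \tfrac12\,\frac{\delta_n}{n\Im w}.
\]

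\textbf{Step 3 (conclusion by continuity).} Suppose \(\sigma<\sfT\) on \(\Omega\). By path continuity of \(\lambda_i\) and of the transported domains, the infimum in \eqref{eq:stopping-clean} is attained: at \(s=\sigma\) there is some \(w\) where the error equals \(\delta_n/(n\Im w)\). This contradicts the factor \(\tfrac12\) from Step 2. Hence \(\sigma=\sfT\) on \(\Omega\), and \(\mathbb P(\Omega)\ge1-Ce^{-(\log n)^2}\).

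The main obstacle is making Step 3 rigorous. The family of domains is indexed by the continuum of pairs \((t,x)\), and the supremum must be both attained and continuous in \(s\). One must also check that the Gr\"onwall bound in Step 1 remains uniform near \(s=t\), where the domains disappear. I would handle this by noting that the constants in Steps 1–2 are uniform in \((t,x)\), and that \(D_s^{(t,x)}\) varies continuously in \(s\le t\) since \(z_s\) is smooth on \(\Lambda_s\).
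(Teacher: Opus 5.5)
Your proposal is correct and follows the paper's proof essentially step for step. You run Gr\"onwall along the stopped characteristic using $|\partial_z\widetilde m_r(z_r)|\le \Im\widetilde m_r(z_r)/\eta_r\le C$ before $\sigma$, plug in the martingale and drift bounds of \Cref{p:error-bounds-clean}, transfer from lattice points to all of $D_s^{(t,x)}$ via \Cref{l:approx-clean} at cost $Cn^{4\mathfrak c-8}$, and conclude by contradiction with the definition of $\sigma$. The differences are cosmetic: an integrating factor instead of Gr\"onwall's inequality, $\delta_n=(\log n)^{-1}$ instead of the paper's $n^{-\mathfrak c/3}$, and an explicit factor-$\tfrac12$ margin with a continuity argument at $s=\sigma$, which the paper leaves implicit.
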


\begin{proof}
We work on the event $\Omega$ from
\Cref{p:error-bounds-clean}.  Fix $t\in[0,\sfT]$,
$x\in\mathcal F_t$, and $u\in\mathcal L\cap D_0^{(t,x)}$.  Define
\[
G_s(u)
:=
\widetilde m_{s\wedge \sigma}(z_{s\wedge \sigma}(u))
-
m_{s\wedge \sigma}(z_{s\wedge \sigma}(u)),
\qquad 0\le s\le t.
\]
By \eqref{eq:exact-initial-stieltjes}, \(G_0(u)=0\). Integrating \eqref{eq:dmt-along-char-clean}, we obtain
\begin{equation}\label{eq:Gs-integral}
G_s(u)
=
\int_0^{s\wedge \sigma}
\partial_z \widetilde m_r(z_r(u))\,G_r(u)\,dr
+
M_{s\wedge \sigma}(u)+R_{s\wedge \sigma}(u).
\end{equation}

We next bound the coefficient \(\partial_z \widetilde m_r(z_r(u))\). By \Cref{l:STproperty},
\[
\bigl|\partial_z \widetilde m_r(z_r(u))\bigr|
\le
\frac{\Im \widetilde m_r(z_r(u))}{\eta_r(u)}.
\]
Before the stopping time,
\[
\Im \widetilde m_r(z_r(u))
\le
\Im m_r(z_r(u))+\frac{\delta_n}{n\eta_r(u)}
=
v(u)+\frac{\delta_n}{n\eta_r(u)}.
\]
Using \Cref{l:geometry-clean}, we deduce that
\begin{equation}\label{eq:derivative-bound-clean}
\bigl|\partial_z \widetilde m_r(z_r(u))\bigr|\le C,
\qquad 0\le r\le t.
\end{equation}

On the event \(\Omega\) from \Cref{p:error-bounds-clean}, combining
\eqref{eq:Gs-integral}, \eqref{eq:martingale-bound-clean},
\eqref{eq:drift-bound-clean}, and \eqref{eq:derivative-bound-clean}, we obtain
\[
|G_s(u)|
\le
C\int_0^s |G_r(u)|\,dr
+
\frac{C(\log n)^2}{n\sqrt{\eps\,\eta_*}}
+
\frac{C\log n}{n\eps}.
\]
By Gr\"onwall's inequality,
\[
|G_s(u)|
\le
C\left(
\frac{(\log n)^2}{n\sqrt{\eps\,\eta_*}}
+
\frac{\log n}{n\eps}
\right),
\qquad 0\le s\le t.
\]
Since \(\eta_*=n^{-\mathfrak c}\), this gives
\begin{equation}\label{eq:lattice-bound-clean}
|G_s(u)|
\le
\frac{C(\log n)^2}{n\sqrt{\eps\,\eta_*}}
=
C(\log n)^2 n^{-1+\mathfrak c/2}.
\end{equation}

Now let \(w\in \mathcal D_{t\wedge \sigma}^{(t,x)}\). By \Cref{l:approx-clean}, there exists
\(
u\in \mathcal L\cap \mathcal D_{0}^{(t,x)})
\)
such that
\[
|z_{t\wedge \sigma}(u)-w|\le Cn^{2\mathfrak c-8}.
\]
On \(\mathcal D_{t\wedge \sigma}^{(t,x)}\), both \(m_{t\wedge \sigma}\) and
\(\widetilde m_{t\wedge \sigma}\) are Lipschitz with constant at most \(Cn^{2\mathfrak c}\), since
\[
|m_{t\wedge \sigma}'(z)|+|\widetilde m_{t\wedge \sigma}'(z)|
\le \frac{C}{(\Im z)^2}\le Cn^{2\mathfrak c}.
\]
Therefore
\[
|m_{t\wedge \sigma}(w)-m_{t\wedge \sigma}(z_{t\wedge \sigma}(u))|
+
|\widetilde m_{t\wedge \sigma}(w)-\widetilde m_{t\wedge \sigma}(z_{t\wedge \sigma}(u))|
\le Cn^{4\mathfrak c-8}.
\]
Combining this with \eqref{eq:lattice-bound-clean}, we obtain
\[
|\widetilde m_{t\wedge \sigma}(w)-m_{t\wedge \sigma}(w)|
\le
\frac{C(\log n)^2}{n\sqrt{\eps\,\eta_*}} + Cn^{4\mathfrak c-8}.
\]
Since \(\eta_*=n^{-\mathfrak c}\) and we can set \(\delta_n=n^{-\mathfrak c/3}\), we have
\[
\frac{(\log n)^2}{n\sqrt{\eps\,\eta_*}} + n^{4\mathfrak c-8}
=
\oo\!\left(\frac{\delta_n}{n\eta_*}\right).
\]
Hence, for \(n\) sufficiently large,
\[
|\widetilde m_{t\wedge \sigma}(w)-m_{t\wedge \sigma}(w)|
<
\frac{\delta_n}{n\,\Im w},
\qquad w\in \mathcal D_{t\wedge \sigma}^{(t,x)},
\]
because \(\Im w\asymp \eta_*\) on \(\mathcal D_{t\wedge \sigma}^{(t,x)}\). This contradicts the definition of
\(\sigma\), unless \(\sigma=\sfT\).

Therefore, on \(\Omega\), we must have \(\sigma=\sfT\). Since
\(\mathbb P(\Omega)\ge 1-Ce^{-(\log n)^2}\), the proof is complete.
\end{proof}

\begin{proof}[Proof of \Cref{t:DBM_support}]
On the event $\{\sigma=\sf T\}$, \Cref{l:beforestop-clean} yields
\[
  \Lambda_n(t)\subset S_t^{[\varepsilon]},
  \qquad 0\le t\le \sf T.
\]
By \Cref{p:stoptime-clean}, 
this proves the result.
\end{proof}